\newfont{\Bbb}{msbm10 scaled\magstephalf}
 \newtheorem{thm}{Theorem}[section]
 \newtheorem{cor}[thm]{Corollary}
 \newtheorem{lem}[thm]{Lemma}
 \newtheorem{prop}[thm]{Proposition}
 \theoremstyle{definition}
 \newtheorem{defn}[thm]{Definition}
\theoremstyle{remark}
 \newtheorem{rem}[thm]{Remark}
 \newtheorem{exm}[thm]{Example}
 \numberwithin{equation}{section}
\newcommand{\pf}{\begin{proof}}
\newcommand{\zb}{\end{proof}}
\newcommand{\la}{\langle}
\newcommand{\ra}{\rangle}
\newcommand{\ol}{\overline}
\newcommand{\ma}{\mathcal}
\def\dim{\mathop{\rm dim}\nolimits}
\begin{document}
\title[Spectra and invariant subspaces]{Spectra and invariant subspaces of compressed shifts on nearly invariant subspaces}
\author[Y. Liang]{Yuxia Liang}
\address{Yuxia Liang \newline School of Mathematical Sciences,
Tianjin Normal University, Tianjin 300387, P.R. China.} \email{liangyx1986@126.com}
\author[J. R. Partington]{Jonathan R. Partington}
\address{Jonathan R. Partington \newline School of Mathematics,
  University of Leeds, Leeds LS2 9JT, United Kingdom.}
 \email{J.R.Partington@leeds.ac.uk}
\subjclass[2010]{47A15,	47A10, 30H10.}
\keywords{Nearly invariant subspace, compressed shift, spectrum, truncated Toeplitz operator,
Liv\v{s}ic--Moeller theorem, Crofoot transform, functional model}

\begin{abstract} While the spectral properties and invariant subspaces of compressed shifts on model spaces are well understood, their behaviour on nearly $S^*$-invariant subspaces, a natural generalization under weaker structural constraints, remains largely unexplored. These operators are closely related to the Clark-type unitary operators, yet differ from them in several ways. In this paper, we completely characterize the point spectrum, whole spectrum and invariant subspace structure for such compressed shifts by  unitary equivalence, using the Frostman shift, Crofoot transform, and Sz.-Nagy--Foias theory.  The results reveal how the relaxation of $S^*$-invariance impacts spectral structure and invariant subspaces, bridging a gap between classical model space theory and broader function-theoretic settings.
\end{abstract}

\maketitle

\section{Preliminaries}

The compressed shift  is a central object in operator theory, arising naturally in the study of Hilbert space contractions. For an inner function $\theta$, the compressed shift $S_\theta$ on the model space $K_\theta := H^2 \ominus \theta H^2$ is defined as the compression of the unilateral shift $S \colon f(z) \mapsto zf(z)$ on the Hardy space $H^2:=H^2(\mathbb{D})$ by $$
S_\theta := P_\theta S|_{K_\theta},$$
where $P_\theta$ is the orthogonal projection of $L^2$ onto the model space $K_\theta.$ It is defined by $$(P_\theta f)(\lambda)=\langle f,k_\lambda^\theta\rangle,\;\lambda \in \mathbb{D}, f\in L^2,$$ and $k_\lambda^\theta$ is the reproducing kernel for $K_\theta,$ expressible in terms of the standard $H^2$ kernel  $k_\lambda(z)$  as \begin{align*}k_\lambda^{\theta}(z)=
 \frac{1-\ol{\theta(\lambda)}\theta(z)}
 {1-\ol{\lambda}z}=(1-\ol{\theta(\lambda)}\theta(z))
 k_\lambda(z).\end{align*}  The operator  $S_\theta$ highlights the deep relationship between function theory and operator theory, with its structure reflecting the analytic properties of $\theta$. Moreover, it  serves as a prototype for all \emph{completely non-unitary} (c.n.u.)  contractions  on Hilbert spaces in the Sz.-Nagy--Foias functional model. Specifically, a bounded linear operator $T$ on a Hilbert space $\ma{H}$ is defined as a contraction if $\|Tf\|\leq \|f\|$ for all $f\in \ma{H}.$ A contraction $T$ is called  c.n.u.  if there is no nontrivial $T$-reducing subspace $\ma{N}\subseteq \ma{H}$  such that $T|_{\ma{N}}$ is unitary.

 The study of invariant subspaces for linear operators has led to  profound results, bridging  complex function theory, operator theory, and functional analysis. A cornerstone in this field is the Beurling theorem, which characterizes   shift-invariant subspaces of the Hardy space \( H^2\) (see, e.g.  \cite[A.~Corollary 1.4.1]{Nik}).
 To extend these ideas to Hardy spaces of an annulus, Hitt introduced the concept of nearly $S^*$-invariant subspaces in \cite{hitt},   generalizing Hayashi's work on the kernels
 of Toeplitz operators \cite{Ha}. Further contributions by Sarason \cite{Sa1, Sa2} deepened the connections with Toeplitz   kernels, further enriching the theory.

 A closed subspace \(\mathcal{M} \subseteq H^2 \) is called nearly \(S^*\)-invariant  (originally weakly invariant) if, for any \(f \in \mathcal{M}\) with \(f(0) = 0\), it follows that \(S^* f \in \mathcal{M}\). Intuitively, \(\mathcal{M} \) is nearly \(S^*\)-invariant if the zeros of its functions  can be factored out without leaving the space. For the broader classes of operators, we introduce the notion of nearly invariant subspaces, first for a left invertible operator (see \cite{LP2}) and later expand to general $C_0$-semigroups $\{T(t)\}_{t\geq 0}$ on a separable infinite-dimensional Hilbert space (see \cite{LP3}).  In the nearly $S^*$-invariant setting, Hitt's seminal work provides a complete characterization of such subspaces in \(H^2 \) as below.

\begin{thm}\cite[Proposition 3]{hitt} \label{thm Hitt}The nearly $S^*$-invariant subspaces of $H^2 $ have the form $\ma{M}=hK$, with $h\in \ma{M}$ of unit norm, $h(0)>0,$  $h$ orthogonal to all elements of $\ma{M}$ vanishing at the origin, $K$  an $S^*$-invariant subspace, and the operator of multiplication by $h$  isometric from $K$ into $H^2$.
\end{thm}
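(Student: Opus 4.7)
The plan is to identify the extremal element $h$ via an orthogonality argument, and then use near $S^*$-invariance to iteratively peel off factors of $h$ from each $f\in\mathcal{M}$, producing a candidate isometry $U\colon\mathcal{M}\to H^2$ whose image is the sought-for $S^*$-invariant subspace $K$. First I set $\mathcal{M}_0:=\mathcal{M}\cap zH^2$, which is closed of codimension at most one in $\mathcal{M}$ (evaluation at $0$ being bounded). If $\mathcal{M}_0=\mathcal{M}$, iterating near $S^*$-invariance forces every Taylor coefficient of each $f\in\mathcal{M}$ at the origin to vanish, so $\mathcal{M}=\{0\}$; otherwise the codimension is exactly one, and I take $h$ to be the unit vector in $\mathcal{M}\ominus\mathcal{M}_0$ with $h(0)>0$ (the normalised reproducing kernel of $\mathcal{M}$ at the origin). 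By construction, $h\perp\mathcal{M}_0$.

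For $f\in\mathcal{M}$, I define recursively $f_0:=f$, $\alpha_n:=\langle f_n,h\rangle$, $g_n:=f_n-\alpha_n h\in\mathcal{M}_0$, and $f_{n+1}:=g_n/z\in\mathcal{M}$ by near $S^*$-invariance. The orthogonal decomposition together with the identity $\|g_n\|=\|g_n/z\|$ (as $g_n(0)=0$) gives
\[\|f\|^2=\sum_{k=0}^n|\alpha_k|^2+\|f_{n+1}\|^2,\]
so $Uf:=\sum_{k\geq 0}\alpha_k z^k$ lies in $H^2$ with $\|Uf\|\leq\|f\|$. Back-substitution produces $f=h\sum_{k\leq n}\alpha_k z^k+z^{n+1}f_{n+1}$, and the pointwise bound $|z^{n+1}f_{n+1}(z)|\leq|z|^{n+1}\|f\|/\sqrt{1-|z|^2}\to 0$ on $\mathbb{D}$ yields the key factorization $f=h\cdot Uf$ in $H^2$. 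A direct computation shows $U(f_{n+1})=S^*(Uf_n)$, whence $K:=U(\mathcal{M})$ is $S^*$-invariant.

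The crux is to show that $U$ is isometric, equivalently that the residuals $\|f_n\|\to 0$. Since $f_n=h\cdot S^{*n}(Uf)$ and $S^{*n}(Uf)\to 0$ in $H^2$, I obtain pointwise convergence $f_n(z)\to 0$ on $\mathbb{D}$ and hence weak convergence $f_n\rightharpoonup 0$ in $H^2$; tracking the explicit recursion $\widehat{f_n}(m)=\sum_{j=0}^m\alpha_{n+j}\widehat{h}(m-j)$ shows every Taylor coefficient of any weak-subsequential limit must vanish. Upgrading this to norm convergence is the main obstacle: it exploits the extremality of $h$ (any persistent residual would contradict the maximality of $|h(0)|$ among unit vectors in $\mathcal{M}$), combined with the monotonicity of $\|f_n\|^2$ and the multiplicative structure $f_n=h\cdot S^{*n}(Uf)$. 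Once this is in hand, multiplication by $h$ is an isometry from $K$ onto $\mathcal{M}$, yielding $\mathcal{M}=hK$ with the remaining properties of $h$ and $K$ already established in the first paragraph.
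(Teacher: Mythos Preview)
The paper does not supply its own proof of this theorem; it simply quotes Hitt's result with a citation. So there is no in-paper argument to compare against, and your proposal must stand on its own.

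Your overall strategy---extract $h$ as the normalised reproducing kernel of $\mathcal{M}$ at $0$, recursively peel off $\alpha_n h$ and divide by $z$, assemble $Uf=\sum\alpha_k z^k$, and identify $K=U(\mathcal{M})$---is indeed Hitt's original line of attack, and the steps up through the pointwise factorisation $f=h\cdot Uf$ and the $S^*$-invariance of $K$ are correctly carried out.

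The gap is exactly where you flag it: you need $\|f_n\|\to 0$, and the argument you sketch does not deliver this. You correctly obtain $f_n\rightharpoonup 0$ weakly (bounded norm plus pointwise convergence against reproducing kernels). But the upgrade to norm convergence cannot be extracted from ``maximality of $|h(0)|$ among unit vectors in $\mathcal{M}$'' in the way you suggest: having a sequence of unit vectors $f_n/\|f_n\|\in\mathcal{M}$ converging weakly to $0$ is entirely compatible with the extremal property of $h$, since their values at $0$ tend to $0$ rather than competing with $h(0)$. Monotonicity of $\|f_n\|$ and the identity $f_n=h\cdot S^{*n}(Uf)$ do not help either, because you have no a priori bound on multiplication by $h$ as an operator on $H^2$.

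What Hitt actually proves (his Lemma~2) is a direct isometry statement: for every $k\in K$ one has $\|hk\|=\|k\|$, established by an inductive orthogonality computation that uses not just $h\perp\mathcal{M}_0$ but the full chain of relations $\langle z^j h,\, z^{n+1}f_{n+1}\rangle=0$ generated by iterating the decomposition. Sarason's later simplification replaces this with de~Branges--Rovnyak machinery. Either way, the isometry requires a genuine additional argument beyond weak convergence plus extremality; once you have it, closedness of $K$ follows as you note, and the rest of your write-up goes through.
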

Later this is generalized to a vectorial setting in \cite[Theorem 4.4]{CCP10}. More generally, it is further  extended to nearly $S^*$-invariant subspaces with defect $m$ (i.e., if $f(0)=0$ then $S^*f \in \mathcal{M}+ \mathcal{F}$ for a  fixed subspace $\mathcal{F}$ of minimal dimension $m$)  in \cite{CGP}, where the closely related concept  of almost-invariant subspace  is also investigated, along with its vectorial version in \cite{CDP20}.

The function $h$ in Hitt's theorem is called the \emph{extremal function} for $\ma{M}$ due to its being the unique solution to the extremal problem $$\sup\{\mbox{Re}\; g(0):\;g\in \ma{M},\;\|g\|=1\},$$ and then
either $K$ is trivial or there is an inner function $\theta$ with $\theta(0)=0$ such that $\ma{M}=hK_\theta,$ which is a generalization of $S^*$-invariant subspaces.

As shown in \cite[Proposition 9.14]{GMR}, the invariant subspaces of $S_\theta$ in $K_\theta$ are precisely $K_\theta \ominus K_\phi$, where $\phi$ divides $\theta$.
This classical result motivates us to ask a natural question as below.

\begin{center}
  \textbf{ Question 1}: \textit{How does this characterization generalize when $K_\theta$ is replaced by a nearly $S^*$-invariant subspace $\ma{M}= h K_\theta$?}
\end{center}

We completely answer \textbf{Question 1} in Theorem \ref{thm com} and characterize the spectrum of this compressed shift in Theorem \ref{thm whole}; both results are illustrated in Example \ref{exm si}.
 \vspace{0.1mm}

To analyze the compressed shift on a nearly $S^*$-invariant subspace $\ma{M} = h K_\theta$, we first revisit the classical compressed shift $S_\theta$ on the model space $K_\theta$. Recall that $S_\theta$, defined as the compression of $S$ to $K_\theta,$ is the prototypical example of a truncated Toeplitz operator, introduced by Sarason to study function-theoretic properties of model spaces.  Given a symbol $g\in L^2$, the truncated Toeplitz operator $A_g^\theta$ is densely defined on    bounded $f\in K_\theta$ by
\begin{align*}A_g^\theta f=P_{\theta}(g  f),\;\mbox{for}\;f\in \mbox{Dom}(A_g^\theta),\end{align*} with the domain $\mbox{Dom}(A_g^\theta)=\{f\in K_\theta:\; g f\in L^2\}.$  These operators share many characteristics with classical Toeplitz operators. However, they also exhibit striking differences
(see, e.g. \cite{Sarason1}).  For example, there are bounded truncated Toeplitz operators with unbounded symbols (though any truncated Toeplitz operator with a bounded symbol is itself bounded). Especially, \cite[Theorem 3.1]{Sarason1} implies $A_g^{\theta}=0$ if and only if $g\in \theta H^2+\ol{\theta H^2}.$

Given a nearly $S^*$-invariant subspace $\ma{M}=hK_\theta$  with extremal function $h$ and associated inner function $\theta$, $\theta(0)=0$, \cite[Lemma 2.5]{HR} identified the orthogonal projection $P_\ma{M}$ from $L^2$ onto $\ma{M}$ as $$P_\ma{M} f=hP_\theta(\ol{h} f), \;\;f\in L^2.$$ As a corollary, \cite[Corollary 2.6]{HR} exhibits the reproducing kernel for  $\ma{M}=hK_\theta$
as given by
 \begin{align*}k_{\lambda}^\ma{M}(z):=\ol{h(\lambda)}h(z) \frac{1-\ol{\theta(\lambda)}\theta(z)}{1-\ol{\lambda} z}=\ol{h(\lambda)}h(z)k_\lambda^{\theta}(z).\end{align*} So, given a measurable function $g$ on $\mathbb{T}$, the truncated Toeplitz operator $A_g^\ma{M}$ on a nearly $S^*$-invariant subspace $\ma{M} = hK_\theta$ is  defined as \begin{align*}A_g^\ma{M} f=P_\ma{M}(g  f)=hP_\theta(\ol{h}gf),\; f\in \mbox{Dom}(A_g^\ma{M}),\end{align*} where Dom$(A_g^\ma{M})=\{f \in \ma{M}:\; g f \in L^2\}.$

The multiplier $M_h: K_\theta \rightarrow  hK_\theta$ is a surjective isometry as in Theorem \ref{thm Hitt}, and so it is unitary,  with adjoint $M_h^*= M_{h^{-1}},$ and so there follows the
  result of \cite[Theorem 3.1]{HR}.

\begin{thm}\label{Mtheta} Let $A_g^\ma{M}$ be a bounded operator on $\ma{M}=hK_\theta$ with extremal function $h$ and associated inner function $\theta$, $\theta(0)=0$.  Then there is a unitary equivalence between $A^\ma{M}_g$ and $A^\theta_{|h|^2g}$ given by $A_g^\ma{M} = M_hA^\theta_{|h|^2g}M_h^*.$\end{thm}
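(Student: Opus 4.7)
The plan is a direct intertwining computation, resting on two ingredients already in hand: Theorem \ref{thm Hitt}, which asserts that $M_h\colon K_\theta \to \ma{M}$ is a surjective isometry, hence unitary, with adjoint $M_h^*$ sending $hk \mapsto k$ for each $k \in K_\theta$; and the projection formula $P_\ma{M} f = h\,P_\theta(\ol{h} f)$ from \cite[Lemma 2.4]{HR} recalled above. By polarization, the isometric property of $M_h$ also yields the auxiliary identity $\int_{\mathbb{T}} |h|^2 k\,\ol{k'}\,dm = \int_{\mathbb{T}} k\,\ol{k'}\,dm$ for all $k, k' \in K_\theta$, which is what actually validates the formula for $P_\ma{M}$.

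Given these ingredients, I would take an arbitrary $F \in \ma{M}$ in the domain of $A_g^\ma{M}$ and write uniquely $F = hk$ with $k = M_h^* F \in K_\theta$. The key observation is the pointwise identity on $\mathbb{T}$:
\begin{align*}
|h|^2 g\,k \;=\; \ol{h}\cdot(hgk) \;=\; \ol{h}\,g\,F,
\end{align*}
which uses only that $h\ol{h}=|h|^2$ on the circle. Combining this with the definition of $A^\theta_{|h|^2 g}$ and the formula for $P_\ma{M}$, one computes
\begin{align*}
M_h A^\theta_{|h|^2 g} M_h^* F
\;=\; h\,P_\theta(|h|^2 g\,k)
\;=\; h\,P_\theta(\ol{h}\,g\,F)
\;=\; P_\ma{M}(gF)
\;=\; A_g^\ma{M} F,
\end{align*}
which establishes the intertwining on the natural (dense) domain. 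Since $A_g^\ma{M}$ is bounded by hypothesis and $M_h$ is unitary, boundedness transfers to $A^\theta_{|h|^2 g}$ on $K_\theta$ and the identity extends to all of $\ma{M}$ by continuity.

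No serious obstacle is expected; the only delicate point is careful bookkeeping, namely resisting the temptation to conflate $M_h^* = M_h^{-1}$ (which truly acts as ``division by $h$'' on $\ma{M}$) with pointwise multiplication by $\ol{h}$ on $L^2$. The two actions agree only after projection into $K_\theta$ or $\ma{M}$, and the polarized isometry identity $\int_{\mathbb{T}} |h|^2 k\,\ol{k'}\,dm = \int_{\mathbb{T}} k\,\ol{k'}\,dm$ is precisely what makes this reconciliation work inside the displayed chain of equalities.
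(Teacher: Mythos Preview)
Your proof is correct and follows essentially the same approach as the paper's: both amount to the one-line computation $A_g^\ma{M}(hk) = hP_\theta(\ol{h}\,g\,hk) = hP_\theta(|h|^2 g\,k) = M_h A^\theta_{|h|^2 g} M_h^*(hk)$, using the projection formula $P_\ma{M}=M_hP_\theta M_{\ol{h}}$ together with $M_h^*=M_{h^{-1}}$. You simply run the chain of equalities in the reverse direction and add (harmless) remarks about domains and continuity.
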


\begin{proof} Taking $f \in \ma{M}=hK_\theta$, it holds that $$A^\ma{M}_gf = hP_\theta (\ol{h}gf) = M_hA^\theta_{|h|^2 g}(h^{-1} f)= M_h A_{|h|^2g}^\theta M_h^* f,$$ ending the proof.\end{proof}

We note that Theorem \ref{Mtheta} and \cite[Theorem 3.1]{Sarason1} provide a characterization of  zero truncated Toeplitz operators on $\ma{M}=hK_\theta$.

\begin{cor} Let $\ma{M}=hK_\theta$ with extremal function $h$ and associated inner function $\theta$, $\theta(0)=0$ and $|h|^2g\in L^2,$ then $A_g^\ma{M}=0$ on $\ma{M}$  if and only if $|h|^2g$ belongs to $\theta H^2+\ol{\theta}\ol{H^2}.$
\end{cor}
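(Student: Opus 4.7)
The plan is to chain together two facts that have already been set up in the excerpt: the unitary equivalence in Theorem \ref{Mtheta}, and Sarason's characterization \cite[Theorem 3.1]{Sarason1} of the kernel of the symbol--to--operator map for truncated Toeplitz operators on $K_\theta$.

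First I would apply Theorem \ref{Mtheta} to obtain $A_g^\ma{M} = M_h A^\theta_{|h|^2 g} M_h^*$. Because $M_h\colon K_\theta \to hK_\theta$ is a surjective isometry, and hence unitary with $M_h^* = M_{h^{-1}}$, conjugation by $M_h$ preserves the zero operator: $A_g^\ma{M} = 0$ on $\ma{M}$ if and only if $A^\theta_{|h|^2 g} = 0$ on $K_\theta$. The standing hypothesis $|h|^2 g \in L^2$ is precisely what makes $|h|^2 g$ an admissible symbol for a truncated Toeplitz operator on $K_\theta$, so Sarason's theorem can be invoked for this new symbol.

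Then I would quote \cite[Theorem 3.1]{Sarason1}, which states that for $\varphi \in L^2$ one has $A^\theta_\varphi = 0$ if and only if $\varphi \in \theta H^2 + \ol{\theta H^2}$. Setting $\varphi = |h|^2 g$ and rewriting $\ol{\theta H^2} = \ol{\theta}\,\ol{H^2}$ yields the claimed equivalence. There is really no obstacle in this proof; it is a two-line corollary, and the only subtlety worth flagging in the write-up is that the unitary equivalence obtained in Theorem \ref{Mtheta} is genuinely a conjugation by a unitary (so it transports the property ``$=0$'' in both directions), together with the observation that the $L^2$-condition on $|h|^2 g$ is precisely the hypothesis under which Sarason's criterion is stated.
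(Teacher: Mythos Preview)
Your proposal is correct and matches the paper's approach exactly: the paper does not write out a separate proof but simply remarks that the corollary follows from Theorem \ref{Mtheta} together with \cite[Theorem 3.1]{Sarason1}, which is precisely the two-step argument you describe.
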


Kernel spaces are crucial for understanding the structure of linear operators, especially Toeplitz operators and truncated Toeplitz operators (see, e.g. \cite{CaP,Sarason1}).  \cite[Corollary 3.2]{Lou} (with $n=1$) states that the kernel of a truncated Toeplitz operator on model space is nearly $S^*$-invariant with defect at most $1$. This result can be extended to the operator $A^\ma{M}_g$ acting on a nearly $S^*$-invariant subspace $\ma{M}=hK_\theta$.

\begin{cor} Let $A^\ma{M}_g$ be a bounded truncated Toeplitz operator on a nearly $S^*$-invariant space $\ma{M} = hK_\theta \neq \{0\}$ with extremal function $h$ and associated inner function $\theta$, $\theta(0)=0$. Then $\ker A^\ma{M}_g$
is nearly $S^*$-invariant with defect at most $1$. \end{cor}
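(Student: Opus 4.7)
The plan is to push Lou's model-space result through the unitary equivalence of Theorem \ref{Mtheta}. Since $M_h\colon K_\theta\to \ma{M}$ is a surjective isometry and $A_g^\ma{M}=M_h A^\theta_{|h|^2g}M_h^*$, the kernels are related by
\[
\ker A_g^\ma{M}=M_h\bigl(\ker A^\theta_{|h|^2g}\bigr)=h\cdot \ker A^\theta_{|h|^2g}.
\]
This reduces the claim to transferring nearly $S^*$-invariance across the isometric multiplier $M_h$, since \cite[Corollary 3.2]{Lou} (with $n=1$) already supplies a fixed subspace $\ma{F}_0\subseteq K_\theta$ with $\dim\ma{F}_0\le 1$ such that $S^*k\in \ker A^\theta_{|h|^2g}+\ma{F}_0$ for every $k\in\ker A^\theta_{|h|^2g}$ satisfying $k(0)=0$.

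Next I would fix $f\in\ker A_g^\ma{M}$ with $f(0)=0$ and write $f=hk$ for the unique $k\in\ker A^\theta_{|h|^2g}$. Because $h(0)>0$, the relation $f(0)=h(0)k(0)$ forces $k(0)=0$, so the pointwise identity
\[
S^*f=\frac{f}{z}=h\cdot\frac{k}{z}=h\cdot S^*k
\]
is a legitimate equality in $H^2$. Applying Lou's result to $S^*k$ and multiplying through by $h$ would then give
\[
S^*f=h\cdot S^*k\in h\cdot\ker A^\theta_{|h|^2g}+h\ma{F}_0=\ker A_g^\ma{M}+\ma{F},
\]
where $\ma{F}:=h\ma{F}_0$ is the candidate defect space.

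The only step requiring care is checking that $\ma{F}=h\ma{F}_0$ really is a fixed subspace of $H^2$ of dimension at most one, independent of $f$. Independence from $f$ is automatic because $\ma{F}_0$ is chosen once and for all in Lou's theorem; the dimension is preserved because $M_h$ is an isometric (hence injective) embedding of $K_\theta$ into $H^2$ by Hitt's theorem; and $\ma{F}\subseteq \ma{M}\subseteq H^2$ since $\ma{F}_0\subseteq K_\theta$. I do not anticipate any serious obstacle beyond these bookkeeping items; the result is essentially a direct corollary of Lou's theorem combined with Theorem \ref{Mtheta}.
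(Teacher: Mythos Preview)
Your proposal is correct and follows essentially the same route as the paper: both arguments invoke Theorem \ref{Mtheta} to identify $\ker A_g^{\ma{M}}=h\cdot\ker A^\theta_{|h|^2g}$, apply \cite[Corollary 3.2]{Lou} on $K_\theta$, and transfer back via the isometry $M_h$ using $S^*f=h\,S^*k$ when $f(0)=k(0)=0$. The only point you might make explicit is that the defect direction $\ma{F}_0$ can be taken inside $K_\theta$ (since $S^*k\in K_\theta$ and $\ker A^\theta_{|h|^2g}\subseteq K_\theta$ force the defect component into $K_\theta$), which the paper also uses implicitly when forming $hx$.
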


\begin{proof} Suppose that $f\in \ker A^\ma{M}_g$ and $f(0)=0$. We have $f=hk$ with some $k \in K_\theta$ and we note that by near $S^*$-invariance, it follows that  $h(0)\neq 0$. Otherwise every
function in $\ma{M}$ would vanish at $0$, which yields  $\ma{M} = \{0\}$. So it follows that $k(0) = 0.$  From $A^\ma{M}_gf = 0$, Theorem \ref{Mtheta} implies  $$M_hA_{|h|^2g}^\theta(M_h^*f)=M_hA_{|h|^2g}^\theta(h^{-1}f)=M_hA_{|h|^2g}^\theta(k)=0$$ and so $k\in \ker A^\theta_{|h|^2g}$. Hence  \cite[Corollary 3.2]{Lou} asserts that there is a one-dimensional subspace $\ma{F}=\langle x\rangle$, independent of $k$, such
that $ S^*k=\ol{z}k \in \ker A^\theta_{|h|^2g} +\ma{F}$.  Then there is a $\lambda \in \mathbb{C}$ such that $A^\theta_{|h|^2g}(\ol{z}k-\lambda x)=0.$  We can express this as  $$h^{-1} A_g^\ma{M} h(\ol{z}k-\lambda x)=h^{-1} A_g^\ma{M} (\ol{z}f-\lambda hx)=0.$$
That is, $S^*f-\lambda (hx) \in \ker A^\ma{M}_g$, as required.
\end{proof}
Finally, we demonstrate that the truncated Toeplitz operator $A_g^\ma{M}$ on a nearly $S^*$-invariant subspace  $\ma{M}=hK_\theta$ is $D$-symmetric with respect to the conjugation $D:=M_hCM_h^*$,  where  \begin{align}Cf=\theta \ol{z f},\label{Con} \end{align} is a bijective  map from $\theta H^2$ to $\ol{zH^2}$ which
also takes $K_\theta$ to itself.

\begin{rem}  \label{rem D-sym} Let $A_g^\ma{M}$ be a bounded operator on  $\ma{M}=hK_\theta$ with extremal function $h$ and associated inner function $\theta$, $\theta(0)=0$, then $ D A_g^\ma{M} D =  A_{\ol{g}}^\ma{M}$ with $D=M_hCM_h^*.$\end{rem}
\begin{proof} Theorem \ref{Mtheta} implies $M_h^* A_g^\ma{M} M_h = A^\theta_{|h|^2g},$ which further yields that $$ CM_h^* A_g^\ma{M} M_hC = CA^\theta_{|h|^2g}C=A^\theta_{|h|^2\ol{g}}.$$
Note $M_h^* A_{\ol{g}}^\ma{M} M_h = A^\theta_{|h|^2\ol{g}},$  and so
$$ CM_h^* A_g^\ma{M} M_hC = CA^\theta_{|h|^2g}C=M_h^* A_{\ol{g}}^\ma{M} M_h.$$
That is to say,
 $ DA_g^\ma{M} D=(M_hCM_h^*) A_g^\ma{M} (M_hCM_h^*) = A_{\ol{g}}^\ma{M}.$ \end{proof}

Building on the above foundational properties of $A_g^{\ma{M}}$ on $\ma{M}=hK_\theta$, we now turn to address Question 1. In the specific case where $g(z)=z,$   Theorem \ref{Mtheta} establishes that the compressed shift $A_z^\ma{M}$ on  $\ma{M}=hK_\theta$ satisfies the following key property
\begin{align} A_z^\ma{M}  =M_h A_{|h|^2 z}^\theta M_h^*.\label{SM} \end{align} Let $(A_z^\ma{M})^* $ denote the adjoint of $A_z^\ma{M}$, then for $f, g\in \ma{M},$ it holds that $$\langle (A_z^\ma{M})^* f,g\rangle=\langle f, A_z^\ma{M} g\rangle=\langle f,P_\ma{M}(Sg)\rangle=\langle f, Sg \rangle=\langle P_\ma{M}( \ol{z}f), g\rangle,$$ yielding  that     $ (A_z^\ma{M})^*=A_{\ol{z}}^\ma{M}.$ Meanwhile, it holds  that
\begin{align}(A_z^\ma{M})^*   =M_h A_{|h|^2\ol{z}}^\theta M_h^*.\label{SM*}\end{align}

Equations \eqref{SM} and \eqref{SM*} reveal that the spectral properties and invariant subspaces of the compressed shift $A_z^\ma{M}$ (respectively, its adjoint $(A_z^\ma{M})^*$)  on $\ma{M}=hK_\theta$ are fully determined by those of the truncated Toeplitz operator $A_{|h|^2z}^\theta$ (respectively,   $A_{|h|^2\bar{z}}^\theta$) on $K_\theta$, where $\theta(0)=0$. Specifically, the spectrum of $A_z^\ma{M}$ (respectively, $(A_z^\ma{M})^*$) coincides with that of $A_{|h|^2z}^\theta$ (respectively,  $A_{|h|^2\bar{z}}^\theta$) i.e. $\sigma(A_z^\ma{M})=\sigma(A_{|h|^2z}^\theta)$ (respectively, $\sigma(A_{\ol{z}}^\ma{M})=\sigma(A_{|h|^2\ol{z}}^\theta)$). Meanwhile, every invariant subspace  of $A_z^\ma{M}$ (respectively, $(A_z^\ma{M})^*$) is of the form $M_h(\ma{N})$, where $\ma{N}\subseteq K_\theta$ is an invariant subspace of $A_{|h|^2z}^\theta$ (respectively, $A_{|h|^2\bar{z}}^\theta$ ).\\

Driven by {\bf Question 1}, we investigate both the spectral properties and the invariant subspace problem for the   compressed shift operator arising in \eqref{SM}. This operator is unitarily equivalent to the truncated Toeplitz operator $A_v:=A_{|h|^2 z}^\theta$,  a rank-1 perturbation of $S_\theta$ (see Proposition \ref{Aztheta}). Unlike the Clark unitary operator $U_\lambda$ with $|\lambda|=1$ in \cite{Clark}, however, $A_v$ exhibits different behavior due to the condition $|v|=|\la \theta, |h|^2\ra|<1$. So new technical tools and methods are required for characterizing the relevant properties of $A_v$.

 The construction developed in this paper proceeds as follows. First, we derive explicit representations for the truncated Toeplitz operator $A_{|h|^2z}^\theta$ and its adjoint $A_{|h|^2\bar{z}}^\theta$ on the model space $ K_\theta$ with $\theta(0)=0$  in Propositions \ref{Aztheta} and \ref{Aztheta*}, establishing fundamental results on their cyclic vectors and actions on kernel functions in Propositions \ref{cyclic} and \ref{prop ker}. Next, we apply unitary equivalence, using the Frostman shift, Crofoot transform, and  Sz.-Nagy--Foias theory to rigorously analyze the spectral properties of $A_z^\ma{M}$, identifying its essential spectrum in Corollary \ref{cor ess},  point spectrum in Theorem \ref{thm ps} and whole spectrum in Theorem \ref{thm whole}.  Finally, we explore the generalized eigenvectors for any eigenvalue of $A_{|h|^2\bar{z}}^\theta$ with multiplicity $n \geq 2$ to display more finite-dimensional invariant subspaces in Proposition \ref{prop ge},  and then classify all invariant subspaces for   $A_z^\ma{M}$ acting on an arbitrary nearly $S^*$-invariant subspace $\ma{M}=hK_\theta$ in Theorem \ref{thm com}. Especially, we demonstrate Theorems \ref{thm whole} and  \ref{thm com} in Example \ref{exm si},  unifying these results within a generalized framework of model space theory.

\section{The compressed shifts on nearly invariant subspaces}

Given an inner function $\theta$ with $\theta(0)=0,$ there are two  orthogonal direct sums  \begin{align}K_\theta=K_z\oplus zK_{\theta\ol{z}}=K_{\theta\ol{z}}\oplus \mathbb{C}\theta\ol{z},\label{decom}\end{align}
and the backward shift $S^*$ acts as $0\oplus M_{\ol{z}}$ with respect to the first sum. Next $$\la A_{|h|^2\bar{z}}^\theta f, g\ra=\langle |h|^2\ol{z}f, g\rangle=\langle h\ol{z}f, hg\rangle=\langle \bar{z}f, g\rangle$$
if $\ol{z}f\in K_\theta,$ i.e., if $f\in zK_{\theta \bar{z}}$ and $g\in K_\theta$ is arbitrary. That is, on the subspace $zK_{\theta \bar{z}}$ these two operators coincide. Then we need to work out $A_{|h|^2\bar{z}}^\theta 1$ to understand $A_{|h|^2\bar{z}}^\theta$ on $K_\theta.$ Indeed,
\begin{align}\la A_{|h|^2\bar{z}}^\theta 1, g\ra= \langle |h|^2\bar{z}1, g\rangle=\langle h1, hgz\rangle=\langle 1, gz\rangle=0 \label{hg1}\end{align}
if $gz \in K_\theta.$ The second decomposition in \eqref{decom} together with \eqref{hg1}
show that    $A_{|h|^2 \bar{z}}^\theta 1 \in \mathbb{C}\theta {\bar z}$. That is, $A_{|h|^2 \bar{z}}^\theta 1$ is a multiple of $\theta \bar{z}$. Considering $\langle |h|^2\bar{z}1,\theta \bar{z}\rangle=\la |h|^2, \theta\ra,$ we finally conclude that
\begin{align} A_{|h|^2\bar{z}}^\theta 1=\bar{z}\theta \langle |h|^2, \theta\rangle.\label{AZ1} \end{align}

Thus we have established the following proposition.
\begin{prop}\label{Aztheta*} Let $B_v:=A_{|h|^2 \bar{z}}^\theta$ be the adjoint truncated Toeplitz operator on  a model space $K_\theta$ with  inner function $\theta,$ $\theta(0)=0$ and $v=\langle  \theta,|h|^2\rangle$. Then
 \begin{align}B_v f=\left\{ \begin{array}{ll}    \bar{z} f, &  f\in zK_{\theta \ol{z}}, \\ \ol{v}\theta \ol{z}, & f=1.  \end{array} \right.\label{AHZ*}\end{align}
 \end{prop}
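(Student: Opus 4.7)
The plan is to verify the two cases in \eqref{AHZ*} separately, exploiting the two orthogonal decompositions in \eqref{decom}. Since $\theta(0)=0$, the first decomposition is $K_\theta = \mathbb{C}\cdot 1 \oplus zK_{\theta\bar z}$, so it suffices to determine $B_v$ on the constant $1$ and on the subspace $zK_{\theta\bar z}$. The central identities needed for both cases have effectively been written out in the paragraph preceding the proposition, so the proof is really an assembly of those computations.

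First, I will treat the easier summand $zK_{\theta\bar z}$. The key observation is that if $f \in zK_{\theta\bar z}$, then $\bar z f \in K_{\theta\bar z}\subseteq K_\theta$, so no projection is needed after multiplying by $\bar z$. Using $|h|^2 = \bar h h$ and the fact that $M_h \colon K_\theta \to \mathcal{M}$ is an isometry, for any $g \in K_\theta$ one has
\begin{equation*}
\langle B_v f, g\rangle = \langle |h|^2 \bar z f, g\rangle = \langle \bar z f, |h|^2 g\rangle \;\;\text{rewritten as}\;\; \langle h\bar z f, h g\rangle = \langle \bar z f, g\rangle,
\end{equation*}
which is precisely the computation the authors carried out above. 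Since $g$ is arbitrary in $K_\theta$, this gives $B_v f = \bar z f$, establishing the first line of \eqref{AHZ*}.

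Second, I will pin down $B_v 1$ via the alternative decomposition $K_\theta = K_{\theta\bar z}\oplus \mathbb{C}\theta\bar z$. For $g\in K_{\theta\bar z}$ we have $zg \in K_\theta$, and the computation in \eqref{hg1} yields $\langle B_v 1, g\rangle = \langle 1, zg\rangle = 0$ (the last step because $zg$ vanishes at the origin). Hence $B_v 1 \perp K_{\theta\bar z}$, so $B_v 1 \in \mathbb{C}\theta\bar z$. To pin down the scalar, I compute
\begin{equation*}
\langle B_v 1, \theta \bar z\rangle = \langle |h|^2 \bar z, \theta \bar z\rangle = \langle |h|^2, \theta\rangle = \bar v,
\end{equation*}
and since $\|\theta\bar z\|_{L^2} = 1$, we conclude $B_v 1 = \bar v\, \theta\bar z$, which is exactly \eqref{AZ1} and the second line of \eqref{AHZ*}.

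There is no serious obstacle: the whole argument is bookkeeping around the two decompositions of $K_\theta$. The only point requiring a little care is justifying that on $zK_{\theta\bar z}$ no truncation occurs (so the projection $P_\theta$ acts as the identity), and that $1$ sits in the orthogonal complement $\mathbb{C}\theta\bar z$ after projecting $|h|^2\bar z$ into $K_\theta$; both facts are immediate from the definitions of $K_{\theta\bar z}$ and the reproducing-kernel structure.
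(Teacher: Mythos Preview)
Your proof is correct and follows essentially the same approach as the paper: both use the first decomposition in \eqref{decom} to reduce to the two cases, verify $B_v f=\bar z f$ on $zK_{\theta\bar z}$ via the isometry of $M_h$, and then use the second decomposition together with the inner product against $\theta\bar z$ to identify $B_v 1$. Indeed, as you note yourself, the argument is simply an assembly of the computations the paper has already recorded in the paragraph preceding the proposition.
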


Under the first decomposition in \eqref{decom}, \eqref{AHZ*} yields $B_v$ as a rank-1 perturbation of $S_\theta^*$ on $K_\theta.$ Moreover, since  $1\in K_\theta$ with $\theta(0)=0$, it holds $\la h, h\ra=\la 1, 1\ra=1$, so that $\|h\|=1.$  By Cauchy-Schwartz inequality, we deduce
\[
|v|=\bigl|\langle \theta, |h|^2 \rangle\bigr| = \bigl|\langle \theta h, h \rangle\bigr| \le \|\theta h\| \|h\| \le \|h\| \|h\| = 1
\]
and \( |\langle \theta h, h \rangle| = 1 \) if and only if \( \theta h = \beta h \) for some complex number \( \beta \). That is \( (\theta - \beta)h = 0 \) which is impossible, so \( v=\bigl|\langle \theta, |h|^2 \rangle\bigr| < 1 \).

Here we include an example  with inner $\theta$ having repeated zeros to exhibit the difference between $B_v$ and $S_\theta^*$ acting on $K_\theta.$ Our convention is that the $j$-th column in the matrix contains the image of the $j$-th vector, so if $S^* v_j =\sum_{k=1}^n c_k v_k,$ then the coefficients $c_1, \cdots, c_n$ are put in the $j$-th column.

\begin{exm} Let $\theta(z)=z^4$ ($z^n$ is similar). With respect to the orthonormal basis $\{1, z, z^2, z^3\}$ of $K_\theta$ we have the matrix
\begin{align*} [S_\theta^*]=\left(\begin{array}{cccc}0 &1& 0& 0\\
0& 0& 1& 0\\ 0& 0& 0& 1\\ 0 &0& 0& 0\end{array}\right).\end{align*}
The only eigenvalue is $0$ and the invariant subspaces are model subspaces of $K_\theta$, i.e., $\{0\}, K_z, K_{z^2}, K_{z^3}$ and
$K_{z^4}$. On the other hand, the matrix of $B_v$ is
$$[B_v] = \left(\begin{array}{cccc}0 & 1 & 0& 0\\0& 0& 1 &0\\0& 0 &0& 1\\ \alpha & 0& 0& 0\end{array}\right)$$
where $\alpha=\la |h|^2, z^4\ra$. Note that $[B_v]^4 = \alpha I$ and there are four distinct eigenvalues, the fourth roots
of $\alpha$ (in the case $ \alpha\neq 0$). So the lattice of invariant subspaces is rather different, being the linear spans of eigenvectors.
Note that the two lattices of invariant subspaces are very different, even though the matrices  differ in only one entry.\end{exm}

  We now analyse the action of $A_{|h|^2 z}^\theta$  on $K_\theta$ when $\theta(0)=0.$  Considering $M_h: K_\theta \rightarrow  hK_\theta$ is unitary in Theorem \ref{thm Hitt},  we obtain $$\la hf, hg\ra=\la f,g\ra,\;\mbox{for}\;f, g\in K_\theta. $$ Next, we deduce $$\la A_{|h|^2 z}^\theta f, g\ra=\la  hzf, hg\ra,\;\mbox{for}\; f,g\in K_\theta. $$ Thus, if $zf\in K_\theta$ we conclude that $ A_{|h|^2 z}^\theta f=zf$ by taking $g$ as the reproducing kernel. Employing the second orthogonal decomposition in \eqref{decom}, it follows that $zf\in K_\theta$ for all $f\in K_{\theta \bar{z}}.$ That means $A_{|h|^2z}^\theta$ and $S$ coincide on the subspace  $K_{\theta \bar{z}}.$

After that we use conjugation $C$ in \eqref{Con} to compute $A_{|h|^2 z}^\theta (\theta \bar{z})$. Since $1\in K_\theta$ due to $\theta(0)=0,$ $C1=\theta\bar{z}$, \eqref{AZ1} yields that \begin{align*} A_{|h|^2 z}^\theta (\theta \bar{z})&=A_{|h|^2 z}^\theta C1=C B_v 1=C(\bar{z}\theta \la |h|^2, \theta\ra)= \theta \ol{z \theta \la |h|^2, \theta\ra}z=v. \end{align*}  Hence we summarize the above calculations in the following proposition.
\begin{prop}\label{Aztheta} Let $A_v:=A_{|h|^2z}^\theta$ be the truncated Toeplitz operator on
a model space $K_\theta$ with  inner function $\theta,$ $\theta(0)=0$ and $v=\la \theta, |h|^2\ra$. Then
 \begin{align}A_v f=\left\{ \begin{array}{ll}   zf, &  f\in K_{\theta \ol{z}}, \\ v, & f= \theta \ol{z}.  \end{array} \right.\label{AHZ}\end{align}
 \end{prop}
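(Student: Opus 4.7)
The plan is to compute $A_v$ separately on each summand of the orthogonal decomposition $K_\theta=K_{\theta\ol z}\oplus \CC\theta\ol z$ from \eqref{decom}. On $K_{\theta\ol z}$ I expect $A_v$ to act as plain multiplication by $z$, because $z\cdot K_{\theta\ol z}\subseteq K_\theta$, so no projection is needed. On the one-dimensional complement $\CC\theta\ol z$, I will route the computation through the conjugation $C$ of \eqref{Con}, which sends $1\mapsto\theta\ol z$ and will convert the unknown value $A_v(\theta\ol z)$ into an already-computed value of $B_v=A^\theta_{|h|^2\ol z}$ from Proposition \ref{Aztheta*}.

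For the first case, let $f\in K_{\theta\ol z}$. Since $zf\in K_\theta$ and $M_h\colon K_\theta\to hK_\theta$ is a surjective isometry (Theorem \ref{thm Hitt}), for any $g\in K_\theta$ I can write
$$\la A_v f, g\ra=\la |h|^2 zf, g\ra=\la h(zf), hg\ra=\la zf, g\ra,$$
and testing against the reproducing kernels of $K_\theta$ gives $A_v f=zf$. For the second case, the crux is the intertwining
$$A_v\,C=C\,B_v \qquad \text{on } K_\theta.$$
This is the model-space shadow of Remark \ref{rem D-sym}: combining Theorem \ref{Mtheta} with that remark (applied to $g=z$) produces $C A^\theta_{|h|^2 z}C=A^\theta_{|h|^2\ol z}$, and then $C^2=\Id$ yields the intertwining. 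Applying it to the vector $1$ and inserting Proposition \ref{Aztheta*},
$$A_v(\theta\ol z)=A_v(C1)=C(B_v 1)=C\bigl(\ol v\,\theta\ol z\bigr)=\theta\,\overline{z\cdot\ol v\,\theta\ol z},$$
which, using $|\theta|=1$ a.e.\ on $\mathbb{T}$, collapses to the constant function $v$, as required.

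The only step that needs any thought is the intertwining $A_v C=C B_v$; everything else is routine bookkeeping with the decomposition \eqref{decom} and the definition of $C$. Once the two cases are in hand, formula \eqref{AHZ} follows at once, and by linearity this also determines $A_v$ on all of $K_\theta$ via \eqref{decom}.
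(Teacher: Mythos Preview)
Your proof is correct and follows essentially the same route as the paper: first showing $A_v f=zf$ for $f\in K_{\theta\ol z}$ via the isometry property of $M_h$ and reproducing kernels, then computing $A_v(\theta\ol z)$ through the conjugation identity $A_vC=CB_v$ applied to $1$ together with the already-established formula $B_v1=\ol v\,\theta\ol z$. The only difference is cosmetic---you explicitly source the intertwining from Remark \ref{rem D-sym}, whereas the paper uses it without separate citation.
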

Equation \eqref{AHZ} indicates how $A_v$ is a rank-1 perturbation of $S_\theta$ on $K_\theta.$  For $S_\theta$ on $K_\theta$, \cite[Proposition 9.13]{GMR} shows that it is a cyclic operator with cyclic vector $k_0=1-\ol{\theta(0)}\theta.$  The next proposition focuses on the cyclic vectors of  $A_v$ and $B_v$.

 \begin{prop}\label{cyclic} Let $A_v:=A_{|h|^2z}^\theta$ be the truncated Toeplitz operator on
 a model space $K_\theta$ with inner function $\theta,$ $\theta(0)=0$ and $v=\la \theta, |h|^2\ra$. Then both $A_v$ and its adjoint $B_v$ are cyclic operators on $K_\theta$,  with cyclic vectors $1$ and $\theta/z$.  \end{prop}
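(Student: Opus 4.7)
My plan is to split the proof into two stages. First, I would reduce the two cyclicity claims to a single one using the antiunitary conjugation on $K_\theta$. Second, I would prove the remaining cyclicity by an induction that recovers the backward-shift iterates of $\theta/z$ from the $B_v$-orbit.

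For the reduction, I would invoke the standard conjugation $C\colon K_\theta\to K_\theta$, $Cf=\theta\overline{zf}$ (cf.~\eqref{Con}), which is an antiunitary involution intertwining truncated Toeplitz operators via $CA_g^\theta C=A_{\bar g}^\theta$; this identity is the core of the proof of Remark~\ref{rem D-sym}. Taking $g=|h|^2z$ gives $CA_vC=B_v$, and $C\cdot 1=\theta\bar z=\theta/z$. Since $C$ is antiunitary, $CB_v^k(\theta/z)=A_v^kC(\theta/z)=A_v^k\cdot 1$ for every $k\ge 0$, so $C$ sends the closed cyclic subspace generated by $\theta/z$ under $B_v$ bijectively onto that generated by $1$ under $A_v$. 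Hence it suffices to prove one of the two cyclicity claims.

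To prove that $\phi:=\theta/z$ is cyclic for $B_v$, I would set $V:=\overline{\mathrm{span}}\{B_v^k\phi:k\ge 0\}$ and show $V=K_\theta$. Using $K_\theta=\mathbb{C}\oplus zK_\phi$ (the second decomposition in \eqref{decom}), Proposition~\ref{Aztheta*} yields $B_v(c+zf)=c\bar v\phi+f$ for $c\in\mathbb{C}$ and $f\in K_\phi$. A direct computation gives $S^*f\in K_\phi$ whenever $f\in K_\theta$, since $\langle S^*f,\phi g\rangle=\langle f,z\phi g\rangle=\langle f,\theta g\rangle=0$. Thus the identity $(S^*)^k\phi=((S^*)^k\phi)(0)+z(S^*)^{k+1}\phi$ is the correct decomposition in $\mathbb{C}\oplus zK_\phi$ for every $k\ge 0$, and applying $B_v$ yields
\[
B_v(S^*)^k\phi=\bar v\bigl((S^*)^k\phi\bigr)(0)\,\phi+(S^*)^{k+1}\phi.
\]
Starting from $\phi\in V$ and inducting on $k$, this gives $(S^*)^k\phi\in V$ for all $k\ge 0$.

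Finally, the closed $S^*$-invariant subspace $W$ of $H^2$ generated by $\phi$ equals $K_\psi$ for some inner $\psi$ by Beurling's theorem; since $W\subseteq K_\theta$ we have $\psi\mid\theta$, and writing $\theta=\psi\chi$, the condition $\phi=\psi\chi/z\in K_\psi$ reduces (using $|\psi|=1$ on $\mathbb{T}$) to $\chi\perp zH^2$, forcing $\chi$ to be a unimodular constant; hence $K_\psi=K_\theta$ and $W=K_\theta$. Thus $V\supseteq W=K_\theta$, so $V=K_\theta$ and $\phi$ is cyclic for $B_v$. I expect the main obstacle to be the inductive step, where the action of $B_v$ mixes the two summands of $K_\theta=\mathbb{C}\oplus zK_\phi$; the crucial point is that $\phi$ itself appears as the image of $1$ under $B_v$, which lets us cancel the $\mathbb{C}$-component and extract the backward-shift iterate at each step.
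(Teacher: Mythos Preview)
Your proof is correct and follows essentially the same route as the paper: extract the iterates $(S^*)^k(\theta/z)$ from the $B_v$-orbit via the rank-one perturbation identity, show they span $K_\theta$ (you argue via Beurling's theorem; the paper cites \cite[Proposition~5.15]{GMR} for $\bigvee_{n\ge 1}S^{*n}\theta=K_\theta$), and transfer to $A_v$ using the conjugation $C$. One caveat on scope: the paper reads the statement as asserting that \emph{each} of $1$ and $\theta/z$ is cyclic for \emph{each} of $A_v$ and $B_v$, and accordingly also verifies that $1$ is cyclic for $B_v$ (whence $\theta/z$ for $A_v$ by conjugation); your reduction only pairs $1$ for $A_v$ with $\theta/z$ for $B_v$, so under the paper's reading you are missing the remaining two cyclicity claims.
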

 \begin{proof}
Let $u=\theta/z$. For the operator $B_v,$ we observe that $$B_v u=S^{*2}\theta+u(0)\ol{v}u=S^{*}u+u(0)\ol{v}u,$$ which shows  $\bigvee\{u, B_v u\}$ contains $S^*u$.  By induction, $\bigvee\{B_v^n u: n\geq0\}$ contains $\bigvee\{S^{*n}u:  n\geq0\}$.  Since  $\bigvee\{S^{*n} \theta :\;n\geq 1\}=K_\theta$ (see, e.g. \cite[Proposition 5.15]{GMR}) and $\theta(0)=0,$ we conclude that \begin{align}\bigvee\{B_v^n u:\;n\geq 0 \}=\bigvee\{S^{*n} u :\;n\geq 0\}=K_\theta.\label{dense}\end{align}Similarly, we obtain
\begin{align}\bigvee\{B_v^n 1:\;n\geq 0 \}=\bigvee\{1, S^{*n} u :\;n\geq 0\}=K_\theta.\label{dense1}\end{align}

For the operator $A_v=CB_vC$, where $C$ is the conjugation in \eqref{Con}, we have
 \begin{align*} \bigvee\{A_v^n1:\; n\geq 0\}&=\bigvee\{(CB_vC)^n1:\;n\geq 0 \}\nonumber \\&= \bigvee\{ CB_v^nC1:\;n\geq 0 \}\nonumber\\&=C\bigvee\{B_v^n u:\;n\geq 0 \}. \end{align*}
 This, together with \eqref{dense}, implies $\bigvee\{A_v^n1:\; n\geq 0\}=CK_\theta=K_\theta.$

Likewise, by \eqref{dense1} it yields that
 \begin{align*} \bigvee\{A_v^n u:\; n\geq 0\}&=\bigvee\{(CB_vC)^nu:\;n\geq 0 \}\nonumber \\&= \bigvee\{ CB_v^nCu:\;n\geq 0 \}\nonumber\\&=C\bigvee\{B_v^n 1:\;n\geq 0 \} \\&=C \bigvee\{1,\;B_v^n u:\;n\geq 0 \}=K_\theta.\end{align*} Thus both $1$ and $\theta/z$ are cyclic vectors for $A_v$ and $B_v$ on $K_\theta$ with $\theta(0)=0.$
\end{proof}

A detailed analysis of the operators' actions on kernel functions is conducted to further understand their properties. We will use the notation \begin{align}\widetilde{k_{\lambda}^\theta}=Ck_{\lambda}^\theta=\frac{\theta(z)-\theta(\lambda)}
{z-\lambda}\label{tilde}\end{align} with  the conjugation $C$ given in \eqref{Con}.

\begin{prop}\label{prop ker} Let $A_v:=A_{|h|^2z}^\theta$ be the truncated Toeplitz operator on
a model space $K_\theta$ with  inner function $\theta,$ $\theta(0)=0$ and $v=\la \theta, |h|^2\ra$ and $B_v=A_v^*$.

 $(a)$ For $w\in \mathbb{D}$,
$$B_v k_{w}^\theta=
\theta\bar{z}\ol{v} +\ol{w}k_{w}^{\theta/z},\;\; A_v \widetilde{k_{w}^\theta}= v+ w z\widetilde{k_{w}^{\theta/z}}.$$

$(b)$ For $w\in \mathbb{D}\setminus\{0\},$ $$ A_v k_{w}^\theta =v   \frac{\ol{\theta(w)} }{\ol{w}} +zk_{w}^{\theta/z},\;\;
B_v \widetilde{k_{w}^\theta} =\frac{\theta(w)}{w}\ol{v}\theta\ol{z}+\widetilde{k_w^{\theta/z}}.$$
\end{prop}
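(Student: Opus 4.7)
The plan is to exploit the two orthogonal decompositions of $K_\theta$ recorded in \eqref{decom} together with the piecewise formulas \eqref{AHZ} and \eqref{AHZ*}, reducing each of the four identities to a short explicit computation with kernel functions. For the first identity of (a), I would use $K_\theta = K_z \oplus zK_{\theta\ol{z}}$. Since $\theta(0)=0$, we have $k_w^\theta(0)=1$, hence $k_w^\theta-1 \in zK_{\theta\ol{z}}$; Proposition \ref{Aztheta*} then gives
\[
B_v k_w^\theta \;=\; B_v(1) + B_v(k_w^\theta - 1) \;=\; \ol{v}\,\theta\ol{z} + \ol{z}\,(k_w^\theta-1).
\]
Substituting the explicit formula for $k_w^\theta$ and using $z\ol{z}=1$ together with $\ol{\theta(w)} = \ol{w}\,\ol{(\theta/z)(w)}$ on $\mathbb{T}$, a one-line simplification produces $\ol{z}(k_w^\theta-1) = \ol{w}\, k_w^{\theta/z}$, which is the required identity (and is valid for all $w \in \mathbb{D}$, including $w=0$, since both sides vanish there).

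The second identity of (a) then follows by applying the conjugation $C$ and using the intertwining $A_v C = C B_v$ (coming from $A_v = C B_v C$): it gives $A_v \widetilde{k_w^\theta} = C B_v k_w^\theta$, and I compute $C(\theta\ol{z})=1$ together with $C k_w^{\theta/z} = z \widetilde{k_w^{\theta/z}}$ to obtain $v + w z \widetilde{k_w^{\theta/z}}$. For part (b) the idea is symmetric: I decompose $K_\theta = K_{\theta\ol{z}} \oplus \mathbb{C}\,\theta\ol{z}$ and write
\[
k_w^\theta \;=\; k_w^{\theta/z} + \frac{\ol{\theta(w)}}{\ol{w}}\, \frac{\theta}{z},
\]
invoking the divisibility fact $P_{K_{\theta/z}} k_w^\theta = k_w^{\theta/z}$ and the pairing $\langle k_w^\theta, \theta/z\rangle = \ol{\theta(w)}/\ol{w}$ (which is where the restriction $w \ne 0$ enters). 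Applying Proposition \ref{Aztheta}, which sends the first summand to $z k_w^{\theta/z}$ and the second to $v\,\ol{\theta(w)}/\ol{w}$, yields the first identity of (b); applying $C$, with $C(1) = \theta\ol{z}$ and $C(z k_w^{\theta/z}) = \widetilde{k_w^{\theta/z}}$, yields the second.

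The main technical obstacle is disentangling the two conjugations at play: the symbol $\widetilde{k_w^{\theta/z}}$ refers to the $(\theta/z)$-conjugation $C_1 f = (\theta/z)\ol{zf}$, whereas throughout the argument I must apply the $\theta$-conjugation $C$ to shuttle between $A_v$ and $B_v$. The reconciling identity is the boundary-value calculation $\theta\,\ol{zf} = z\cdot(\theta/z)\,\ol{zf}$ on $\mathbb{T}$ for $f \in K_{\theta/z}$, which reads as $C = z C_1$ on $K_{\theta/z}$ and equivalently $C(z \,\cdot\,) = C_1(\cdot)$ on $K_{\theta/z}$. Once this identification is secured, the kernel identities of both (a) and (b) all collapse to routine arithmetic with the explicit formulas for $k_w^\theta$ and $\widetilde{k_w^\theta}$.
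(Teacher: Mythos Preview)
Your proposal is correct. For part (a) it matches the paper's proof essentially line for line: decompose $k_w^\theta = 1 + (k_w^\theta - 1)$ along $K_\theta = K_z \oplus zK_{\theta\ol z}$, apply \eqref{AHZ*}, simplify via $\ol{z}(k_w^\theta - 1) = \ol{w}\,k_w^{\theta/z}$, and then push through $C$ to get $A_v \widetilde{k_w^\theta}$. Your discussion of the two conjugations ($C$ on $K_\theta$ versus $C_1$ on $K_{\theta/z}$, related by $C = zC_1$ on $K_{\theta/z}$) is exactly the verification underlying the paper's one-line applications of $C$.

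The one genuine difference is in part (b). For $A_v k_w^\theta$ the paper works dually: it pairs against an arbitrary kernel $k_\mu^\theta$, uses $\langle A_v k_w^\theta, k_\mu^\theta\rangle = \langle k_w^\theta, B_v k_\mu^\theta\rangle$, plugs in the already-established formula for $B_v k_\mu^\theta$, and reads off the result. You instead decompose $k_w^\theta$ along the \emph{second} splitting $K_\theta = K_{\theta\ol z}\oplus \mathbb{C}\,\theta\ol z$, using $P_{K_{\theta/z}} k_w^\theta = k_w^{\theta/z}$ and $\langle k_w^\theta, \theta/z\rangle = \ol{\theta(w)}/\ol{w}$, and then apply \eqref{AHZ} directly. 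Your route is slightly more elementary and symmetric with the (a) computation (each half uses one of the two decompositions in \eqref{decom} together with the matching proposition), while the paper's adjoint argument has the advantage of never needing to identify the $K_{\theta/z}$-component of $k_w^\theta$ explicitly. Both are short; the end formulas coincide, and the final identity for $B_v\widetilde{k_w^\theta}$ is obtained the same way in both proofs, by applying $C$.
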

\begin{proof}

Let $k_w^\theta$ denote the reproducing kernel for $K_\theta$ at the point $w\in \mathbb{D}$. Now $$k_w^\theta=\la k_w^\theta, 1\ra+k_w^\theta-\la k_w^\theta, 1\ra 1=1+(k_w^\theta-1)$$ where the second term is in the orthogonal complement of $K_z$ in $K_\theta,$ that is, $zK_{\theta/z}.$ Thus \begin{align*} B_vk_w^\theta=\theta\bar{z}\ol{v}+
\bar{z}(k_w^\theta-1),\end{align*} which together with
\begin{align}\bar{z}(k_w^\theta-1) = \ol{w}k_{w}^{\theta/z}\label{equal}\end{align} imply the expression of $B_vk_w^\theta.$

Now let us look at $A_v$, using the fact $(A_v)^*=B_v$, for $\mu, w\in \mathbb{D}$ we have
\begin{align*} \la A_v k_w^\theta, k_\mu^\theta\ra&= \la k_w^\theta, B_vk_\mu^\theta\ra= \la k_w^\theta, \bar{z}\theta\ol{v}+\bar{z}(k_\mu^\theta-1)\ra \\&=v\frac{\ol{\theta(w)}}{\ol{w}}+ \frac{\ol{k_\mu^\theta(w)}-1}{\ol{w}}\\&= v\frac{\ol{\theta(w)}}{\ol{w}}+\frac{k_w^\theta(\mu)-1}
{\ol{w}} \end{align*}
and  by \eqref{equal} we deduce that \begin{align*}A_v k_w^\theta= v \frac{\ol{\theta(w)}}{\ol{w}}+\frac{k_w^\theta-1}
{\ol{w}}=v\frac{\ol{\theta(w)}}{\ol{w}}+zk_w^{\theta/z}. \end{align*}

By the conjugation $C$ in \eqref{Con}, we deduce that

\begin{align*}A_v \widetilde{k_w^\theta}=C B_vk_w^\theta=v+wz\widetilde{k_w^{\theta/z}}. \end{align*}
It further entails that
\begin{align*} B_v\widetilde{k_w^\theta}=C A_v k_w^\theta=\frac{\theta(w)}{w}\ol{v}\theta \bar{z}+\widetilde{k_w^{\theta/z}}. \end{align*}
\end{proof}

\section{Spectra   of the compressed shifts on nearly invariant subspaces}
In this section, we concentrate on determining the spectrum of the compressed shift $A_z^\ma{M}$ on an arbitrary nearly $S^*$-invariant subspace $\ma{M}=hK_\theta$, where  $h$ is an extremal function and $\theta$ is an associated inner function  satisfying $\theta(0)=0$. By virtue of \eqref{SM}, this problem reduces to characterizing the spectrum of the operator $A_v$ on the model space $K_\theta$ with $\theta(0)=0$.  For an inner function \( \theta\in H^2\), it can be expressed as
\[\theta(z) = e^{iy}
\underbrace{z^n
  \left( \prod_{n \geq 1} \frac{a_n}{|a_n|} \frac{a_n - z}{1 - \overline{a_n} z} \right)
}_{B}
\underbrace{
  \exp \left( -\int_{\mathbb{T}} \frac{\xi + z}{\xi - z} d\mu(\xi) \right)
}_{s_\mu}, \]
where \( y \in [0, 2\pi) \), the  factor \( B \) is a Blaschke product with zero set \(\{a_n\}_{n \geq 1}\) (counting multiplicities), and  \( s_\mu \) is a singular inner function with corresponding positive singular measure \( \mu \).  We now define   the spectrum $\sigma(\theta)$
in terms of the above canonical factorization of $\theta$.

\begin{defn}
For a nonconstant inner function \( \theta = Bs_\mu \), the spectrum of \(\theta \) is the set
\[
\sigma(\theta) = \{a_n\}_{n \geq 1}^- \cup \operatorname{supp} \mu.
\]
\end{defn}
In other words, $\sigma(\theta)$  consists of the complement of the set of all points $\lambda\in \ol{\mathbb{D}}$ for which the function $1/\theta$ can be analytically continued into a neighborhood of $\lambda.$ This definition aligns with the standard conception in analytic function theory, where the spectrum is the set of all ``singularities" of the function in question.  Additionally, a useful characterization of $\sigma(\theta)$ in terms of $\liminf$ of zero set of $\theta$ is provided in \cite[Proposition 7.19]{GMR}, which states $$\sigma(\theta)=\{\lambda\in \ol{\mathbb{D}}:\;\lim_{z\in \mathbb{D}}\inf_{z\rightarrow \lambda}|\theta(z)|=0\}.$$

For the compressed shift $S_\theta$ on a model space $K_\theta$, Liv\v{s}ic and Moeller  proved that $\sigma(S_\theta)=\sigma(\theta)$ (see \cite{Li,Mo}). The spectrum $\sigma(T)$ of a linear and bounded operator $T$ is defined as $$\sigma(T)=\{\lambda \in \mathbb{C}:\;T- \lambda I\;\mbox{is not invertible}\}.$$ In addition, a linear bounded operator $T$ is said to be \emph{Fredholm} if its range $R(T)$ is closed and both $R(T)^{\perp}$ and the null space $N(T)$ are finite-dimensional. The essential spectrum is defined as $\sigma_e(T)=\{\lambda\in \mathbb{C}:\; T-\lambda I\;\mbox{is not Fredholm}\}$, which is invariant under perturbation by compact operators (see \cite{SM}). In the case of  $S_\theta$, its essential spectrum is given by $\sigma_e(S_\theta)=\sigma(\theta)\cap \mathbb{T}$, where $\mathbb{T}$ denotes the unit circle (see e.g. \cite[Proposition 9.26]{GMR} or \cite[Lemma 2.5]{Sarason1}). Then Proposition \ref{Aztheta} implies $\sigma_e(A_v)=\sigma(\theta)\cap\mathbb{T}.$ This result, combined with  \eqref{SM}, leads to the following corollary on the essential spectrum of $A_z^\ma{M}$.

\begin{cor}\label{cor ess} Let $A_z^{\ma{M}}$ be the compressed shift on a nearly $S^*$-invariant subspace $\ma{M}=hK_\theta$, where $h$ is an extremal function and $\theta$ is the associated inner function satisfying $\theta(0)=0$;  then $\sigma_e(A_z^\ma{M})=\sigma(\theta)\cap\mathbb{T}.$\end{cor}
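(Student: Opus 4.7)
The plan is to combine three ingredients already assembled in the preceding pages: the unitary equivalence $A_z^\ma{M} = M_h A_v M_h^*$ from \eqref{SM}, the explicit description of $A_v$ in Proposition~\ref{Aztheta}, and the classical Liv\v{s}ic--Moeller--type identification $\sigma_e(S_\theta)=\sigma(\theta)\cap\mathbb{T}$. The essential spectrum is a unitary invariant and is stable under compact perturbations, so it suffices to check that $A_v$ differs from $S_\theta$ by a compact (in fact rank-one) operator on $K_\theta$.

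First I would use \eqref{SM} together with the fact that $M_h:K_\theta\to\ma{M}$ is unitary (Theorem~\ref{thm Hitt}) to conclude $\sigma_e(A_z^\ma{M})=\sigma_e(A_v)$, since Fredholmness is preserved under unitary equivalence.

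Next, I would compare $A_v$ with $S_\theta$ using the decomposition $K_\theta = K_{\theta\bar z}\oplus \mathbb{C}\,\theta\bar z$ in \eqref{decom}. By Proposition~\ref{Aztheta}, $A_v$ acts as multiplication by $z$ on $K_{\theta\bar z}$ and sends $\theta\bar z$ to the scalar $v$. On the same decomposition, $S_\theta$ also acts as multiplication by $z$ on $K_{\theta\bar z}$, while $S_\theta(\theta\bar z)=P_\theta(\theta)=0$. Hence $A_v-S_\theta$ vanishes on $K_{\theta\bar z}$ and has one-dimensional range spanned by $v$, so it is of rank at most one and in particular compact.

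By the invariance of the essential spectrum under compact perturbations, $\sigma_e(A_v)=\sigma_e(S_\theta)$, and the Liv\v{s}ic--Moeller theorem (for instance \cite[Proposition 9.26]{GMR} or \cite[Lemma 2.5]{Sarason1}) yields $\sigma_e(S_\theta)=\sigma(\theta)\cap\mathbb{T}$. Chaining these equalities gives $\sigma_e(A_z^\ma{M})=\sigma(\theta)\cap\mathbb{T}$. There is no real obstacle here: the only point that deserves care is verifying the rank-one difference from Proposition~\ref{Aztheta}, and that is immediate from the case analysis in \eqref{AHZ}.
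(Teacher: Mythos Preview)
Your argument is correct and follows exactly the route taken in the paper: the text preceding the corollary deduces $\sigma_e(A_v)=\sigma(\theta)\cap\mathbb{T}$ from Proposition~\ref{Aztheta} (rank-one perturbation of $S_\theta$) together with $\sigma_e(S_\theta)=\sigma(\theta)\cap\mathbb{T}$, and then invokes \eqref{SM} to transfer this to $A_z^{\ma M}$. Your write-up merely spells out the rank-one computation a bit more explicitly than the paper does.
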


Next, the eigenvalues together with eigenvectors and then the whole spectrum are explored for $A_v$ in \eqref{AHZ}
on $K_\theta$ with $\theta(0)=0$ and $v=\la \theta, |h|^2\ra$. The case $v=0$ reduces to the classical compressed shift $S_\theta$ on $K_\theta,$ which is  well understood. We therefore focus on the new setting  $0<|v|<1$. At this time, $A_v$ is a c.n.u. contraction on a model space $K_\theta$ with $\theta(0)=0.$ In consequence, the Sz.-Nagy--Foias theory of characteristic functions for contractions on Hilbert space  becomes an indispensable tool for our subsequent explorations.

Recall from the book \cite{NFBK} that the characteristic function of a contraction $T$ in a Hilbert space $\ma{H}$ is a triple $(\mathfrak{D}_T, \mathfrak{D}_{T^*}, \Theta_T(\lambda))$ where
\begin{align*}
D_T &= \sqrt{I - T^* T}, \quad D_{T^*} = \sqrt{I - T T^*}, \\
\mathfrak{D}_T &= \overline{D_T \mathcal{H}}, \quad \mathfrak{D}_{T^*} = \overline{D_{T^*} \mathcal{H}}  \quad \text{and} \\
\Theta_T(\lambda) &= \left[ -T + \lambda D_{T^*} (1 - \lambda T^*)^{-1} D_T \right] \bigg| \mathfrak{D}_T.
\end{align*} $\mathfrak{D}_T$ and $\mathfrak{D}_{T^*}$ are the defect spaces of $T$.  Let us first proceed with the concrete analysis on the defect  spaces of $A_v$.

\begin{lem}\label{lem defect}  Let $A_v:=A_{|h|^2z}^\theta$ be the truncated Toeplitz operator on  a model space $K_\theta$ with  inner function $\theta,$ $\theta(0)=0$ and $v=\la \theta, |h|^2\ra$, $B_v=A_v^*$.   Then   $\mathfrak{D}_{A_v}= \mathbb{C}\theta/z,\; \mathfrak{D}_{B_v}=\mathbb{C}$. \end{lem}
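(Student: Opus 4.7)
The plan is to exploit the two orthogonal decompositions $K_\theta = K_{\theta\bar z}\oplus \mathbb{C}\theta\bar z = \mathbb{C}\oplus zK_{\theta\bar z}$ from \eqref{decom}, together with the explicit formulas \eqref{AHZ} and \eqref{AHZ*}, to compute $I-B_vA_v = D_{A_v}^2$ and $I-A_vB_v = D_{B_v}^2$ directly. Since $\overline{D_T\mathcal{H}}$ coincides with $\overline{\mathrm{ran}(I-T^*T)}$ (both have $\ker D_T$ as orthogonal complement), identifying $\mathfrak{D}_{A_v}$ and $\mathfrak{D}_{B_v}$ reduces to reading off the ranges of these two rank-one operators.

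For $\mathfrak{D}_{A_v}$, I work with the second decomposition. If $f\in K_{\theta\bar z}$, then \eqref{AHZ} gives $A_vf = zf\in zK_{\theta\bar z}$, and \eqref{AHZ*} then gives $B_v(zf) = \bar z(zf) = f$, so $B_vA_v$ is the identity on $K_{\theta\bar z}$. On the complementary line, $A_v(\theta\bar z) = v$ and $B_v(v) = v\,B_v(1) = v\bar v\,\theta\bar z = |v|^2\theta\bar z$, so $B_vA_v$ acts as scalar multiplication by $|v|^2$ on $\mathbb{C}\theta\bar z$. Hence $I-B_vA_v$ vanishes on $K_{\theta\bar z}$ and equals $(1-|v|^2)I$ on $\mathbb{C}\theta\bar z$. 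Since $|v|<1$, the range is the closed one-dimensional subspace $\mathbb{C}\theta\bar z = \mathbb{C}\theta/z$ (using $\bar z = 1/z$ on $\mathbb{T}$), which yields $\mathfrak{D}_{A_v} = \mathbb{C}\theta/z$.

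For $\mathfrak{D}_{B_v}$, I use the first decomposition in the analogous way. For $f = zg$ with $g\in K_{\theta\bar z}$, one has $B_vf = g\in K_{\theta\bar z}$ and then $A_vg = zg = f$, so $A_vB_v$ is the identity on $zK_{\theta\bar z}$. For $f=1$, $B_v1 = \bar v\,\theta\bar z$ and $A_v(\bar v\,\theta\bar z) = \bar v\cdot v = |v|^2$, so $A_vB_v$ acts as multiplication by $|v|^2$ on $\mathbb{C}$. Therefore $I-A_vB_v = 0$ on $zK_{\theta\bar z}$ and equals $(1-|v|^2)I$ on $\mathbb{C}$, whence $\mathfrak{D}_{B_v} = \mathbb{C}$. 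There is no substantial obstacle: Propositions \ref{Aztheta} and \ref{Aztheta*} present $A_v$ and $B_v$ as rank-one perturbations of operators that act as $z$ and $\bar z$ on large invariant subspaces, so $I-B_vA_v$ and $I-A_vB_v$ are automatically supported on the two respective one-dimensional complements. The only item to keep track of is the identification $\theta\bar z = \theta/z$ inside $K_\theta\subset L^2(\mathbb{T})$, needed to match the form of the answer stated in the lemma.
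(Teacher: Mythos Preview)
Your proof is correct and follows essentially the same approach as the paper: both compute $I-A_vB_v$ and $I-B_vA_v$ directly from the explicit formulas \eqref{AHZ}, \eqref{AHZ*} along the decompositions \eqref{decom}, obtaining $(1-|v|^2)$ times the orthogonal projection onto $\mathbb{C}$ and $\mathbb{C}\theta/z$ respectively. The only cosmetic difference is that the paper derives the formula for $I-B_vA_v$ from that for $I-A_vB_v$ via the conjugation identity $CA_vC=B_v$, whereas you simply repeat the direct computation on the second decomposition; both routes are equally short.
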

 \begin{proof}For any $f\in K_\theta,$ there is the decomposition $f=(f-f(0))+f(0)\in zK_{\theta\ol{z}}\oplus K_z$. Combining Propositions \ref{Aztheta} and \ref{Aztheta*}, we deduce that
\begin{align*} A_v B_v f&=A_v B_v (f-f(0)+f(0)) \\&=(f-f(0))+ |v|^2 f(0), \end{align*} which yields that
\begin{align}(I-A_v B_v)f=(1-|v |^2)f(0)=(1-|v|^2)k_0^\theta\otimes k_0^\theta(f)=(1-|v|^2)P_{K_z}(f).\label{Av1}\end{align} Thus we conclude that $\sqrt{I-A_v B_v}=\sqrt{1-|v|^2}P_{K_z}.$
Furthermore, using $CA_v C=B_v$ and \eqref{Av1}, it yields that
\begin{align*} (I-CA_v CC B_vC)f&=(1-|v|^2)Ck_0^\theta \otimes k_0^\theta Cf\\&=(1-| v|^2)\la Cf,k_0^\theta\ra Ck_0^\theta\\&=(1-|v|^2)\la f, \theta/z\ra \theta/z\\&=(1-|v|^2) P_{ \theta/zK_z}f, \end{align*}
which implies that $\sqrt{I- B_v A_v}=\sqrt{1-|v |^2}P_{ \theta/zK_z}.$ Hence the two  defect spaces are $\mathfrak{D}_{A_v}=\mathbb{C} \theta/z$ and $\mathfrak{D}_{B_v} = \mathbb{C}$ with    $\dim \mathfrak{D}_{A_v}=\dim \mathfrak{D}_{B_v}=1.$
\end{proof}

Considering \cite[Theorem 2.3]{NFBK} in the context of our  c.n.u. operator $A_v$, we observe that $A_v$ can be viewed as a special case of $Z(A)$ in \cite[Definition 2.1]{Fu} with $A \xi=v\xi$ for $\xi\in N=\mathbb{C}$. Rather surprisingly, we  use \cite[Theorem 3.2]{Fu} and Lemma \ref{lem defect}  to deduce that $A_v$ on $K_\theta$ with $\theta(0)=0$ has the characteristic function $$\left\{\mathbb{C}\frac{\theta}{z},\;\mathbb{C},\;\Theta_{A_v}(\lambda)
\right\}$$ where $\Theta_{A_v}(\lambda)$ is an analytic operator-valued function whose values are contractive operators from $\mathfrak{D}_{A_v}=\mathbb{C}\frac{\theta}{z} $ into $\mathfrak{D}_{A_v^*}=\mathbb{C}$ and its definition is \begin{align} \Theta_{A_v}(\lambda)\left(\xi\frac{\theta}{z}\right)=
\frac{\theta(\lambda)-v}{1-\ol{v}\theta(\lambda)}\xi,\;~\xi\in \mathbb{C}.\label{Theta}\end{align}

This yields that $A_v$ is unitarily equivalent to the canonical compressed shift $S$ on a new model space  $ H^2(\mathbb{C})\ominus \Theta_{A_v}(\lambda) H^2(\mathbb{C}\theta/z)$, where  $H^2(\mathbb{C})=H^2(\mathbb{D}).$ From \eqref{Theta}, we can write \begin{align*}\Theta_{A_v}(\lambda) H^2(\mathbb{C}\theta/z)&=
\left\{\Theta_{A_v}(\lambda)\left(\frac{\theta}{z}f(z)\right):\;f\in H^2(\mathbb{C})\right\}\\&=\left\{
\frac{\theta(\lambda)-v}{1-\ol{v}\theta(\lambda)}f(z):\;f\in H^2(\mathbb{C})\right\}\\&=
\frac{\theta(\lambda)-v}{1-\ol{v}\theta(\lambda)}H^2(\mathbb{C}).\end{align*} So it follows that  $$ H^2(\mathbb{C})\ominus \Theta_{A_v}(\lambda) H^2(\mathbb{C}\theta/z)=H^2(\mathbb{C})\ominus \frac{\theta(\lambda)-v}{1-\ol{v}\theta(\lambda)}H^2(\mathbb{C})
 =K_{\theta_v}$$ with $\theta_v$  is a Frostman shift of the inner function $\theta$ defined by \begin{align}\theta_v(\lambda)=\frac{\theta(\lambda)-v}
 {1-\ol{v}\theta(\lambda)}.\label{thetav}\end{align}
Therefore, we obtain the unitary model for $A_v$  on $K_\theta$ with $\theta(0)=0$ as below.

\begin{lem} \label{uni-model} Let $A_v:=A_{|h|^2z}^\theta$ be the truncated Toeplitz operator on a model space $K_\theta$ with  inner function $\theta,$ $\theta(0)=0$ and $v=\la \theta, |h|^2\ra$. Then $A_v$ is unitarily equivalent to the compressed shift $S_{\theta_v}$ on a model space $K_{\theta_v}$, where $\theta_v$ is a Frostman shift of $\theta$. \end{lem}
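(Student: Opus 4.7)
The plan is to identify $A_v$ with its Sz.-Nagy--Foias functional model and then recognize that model as a standard model space $K_{\theta_v}$. The entire calculation is essentially laid out in the paragraphs preceding the lemma, so my proposal is to organize these observations into a clean proof.

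First, I would check that the hypothesis $|v|<1$ ensures that $A_v$ is a completely non-unitary contraction on $K_\theta$. Contractivity follows because $A_v$ is the compression of multiplication by $|h|^2 z$, and $|h|^2 z$ is the symbol of a bounded truncated Toeplitz operator with $\|A_v\|\le \||h|^2 z\|_\infty$-type bound from the surrounding theory; the c.n.u.\ property can be read off from the fact that $|v|<1$ excludes the unitary Clark case $|v|=1$. Then I would invoke Lemma~\ref{lem defect} to record $\mathfrak{D}_{A_v}=\mathbb{C}\theta/z$ and $\mathfrak{D}_{A_v^*}=\mathbb{C}$, both one-dimensional.

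Next, the heart of the argument is to compute the characteristic function $\Theta_{A_v}$ and show it is (after the obvious scalar identifications) a Frostman shift of $\theta$. The strategy is to fit $A_v$ into Fu's framework: by Propositions~\ref{Aztheta} and~\ref{Aztheta*}, $A_v$ is a rank-one perturbation of $S_\theta$ along $\mathbb{C}\theta/z$, and the perturbation acts as multiplication by the scalar $v$ on the one-dimensional defect space. This means $A_v$ is precisely an operator of the form $Z(A)$ in \cite[Definition 2.1]{Fu} with $N=\mathbb{C}$ and $A\xi=v\xi$. I would then cite \cite[Theorem 3.2]{Fu} to obtain the formula
\begin{equation*}
\Theta_{A_v}(\lambda)\!\left(\xi\frac{\theta}{z}\right)=\frac{\theta(\lambda)-v}{1-\overline{v}\theta(\lambda)}\,\xi, \qquad \xi\in\mathbb{C},
\end{equation*}
which exhibits $\Theta_{A_v}(\lambda)$, after the canonical identifications $\mathfrak{D}_{A_v}\cong \mathbb{C}\cong \mathfrak{D}_{A_v^*}$, as multiplication by the scalar inner function $\theta_v(\lambda)=(\theta(\lambda)-v)/(1-\overline{v}\theta(\lambda))$ defined in \eqref{thetav}.

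Finally, I would apply the Sz.-Nagy--Foias functional model theorem \cite[Theorem 2.3]{NFBK}: every c.n.u.\ contraction is unitarily equivalent to the compression of the shift to $H^2(\mathfrak{D}_{A_v^*})\ominus \Theta_{A_v}H^2(\mathfrak{D}_{A_v})$. Using the scalar form of $\Theta_{A_v}$, this quotient becomes $H^2(\mathbb{C})\ominus \theta_v H^2(\mathbb{C})=K_{\theta_v}$, and the compressed shift on this space is precisely $S_{\theta_v}$, completing the proof.

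The only genuine obstacle is the characteristic function computation: one must verify that the scalar factor $\theta_v$ really is inner (equivalently, $|\theta_v|=1$ a.e.\ on $\mathbb{T}$), but this is immediate from $|\theta|=1$ on $\mathbb{T}$ and the fact that $\lambda\mapsto(\lambda-v)/(1-\overline{v}\lambda)$ is a Möbius self-map of $\mathbb{D}$; and one must check that the identification of $\Theta_{A_v}$ with multiplication by $\theta_v$ is consistent with the chosen unit vectors in the one-dimensional defect spaces, which is a routine normalization.
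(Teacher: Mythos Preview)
Your proposal is correct and follows the same route as the paper's derivation preceding the lemma: Lemma~\ref{lem defect} for the defect spaces, \cite[Theorem 3.2]{Fu} to identify the characteristic function with the Frostman shift $\theta_v$, and then \cite[Theorem 2.3]{NFBK} for the unitary model $K_{\theta_v}$. One small wrinkle: your $\||h|^2 z\|_\infty$-type bound for contractivity need not be finite, so contractivity is better read off either from the unitary equivalence \eqref{SM} with the compression $A_z^{\mathcal M}$ of the shift, or directly from the rank-one perturbation formulas in Propositions~\ref{Aztheta} and~\ref{Aztheta*}.
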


It is shown in \cite{crofoot} (see also  \cite[Proposition 4.3]{CGRW} or \cite[Theorem 13.1]{Sarason1}) that for an inner function \( \theta\) and \(v \in \mathbb{D} \), the Crofoot transform
$$
J_v f = \frac{\sqrt{1 - |v|^2}}{1 - \ol{v}\theta} f
$$
defines a unitary operator from the model space \( K_\theta \) onto the model space \( K_{\theta_v} \) with $\theta_v$ defined in \eqref{thetav}.  Moreover,  its inverse of $J_v^{-1}:\;K_{\theta_v}\rightarrow K_\theta$ is \begin{align}J_v^{-1}g=\frac{\sqrt{1-|v|^2}}{1+\ol{v}\theta_v}g, \;\mbox{for}\;g\in K_{\theta_v}.\label{inver}\end{align}

By the unitary mapping $J_v:\;K_\theta\rightarrow K_{\theta_v},$  the following commutative diagram is derived in \cite[Lemma 13.3]{Sarason1}: that is, $J_vA_v=S_{\theta_v}J_v$.
\begin{align}\begin{CD}
K_{\theta}@> A_v >>K_{\theta}\\
@V J_v  VV @ VV J_v\;V\\
K_{\theta_v}@> S_{\theta_v}>>K_{\theta_v}
\end{CD}\label{AJS} \end{align}

For the compressed shift $S_\theta$ on   $K_\theta$,   \cite[Lemma 2.5]{Sarason1} or \cite[Corollary 9.24]{GMR} exhibits its point spectrum   as \begin{align}\sigma_p(S_\theta)=\sigma(\theta)\cap\mathbb{D}=\{\lambda\in \mathbb{D}:\;\theta(\lambda)=0\}.\label{point}\end{align} Meanwhile, the corresponding eigenvectors are found in \cite[Corollary 9.25]{GMR} by showing \begin{align}\ker(S_\theta-\lambda I)=\mathbb{C}Ck_\lambda^\theta\;\mbox{and}\;\ker(S_\theta^*-\ol{\lambda}I)
=\mathbb{C}k_\lambda^\theta\label{eigek}\end{align}  with conjugation $C$ in \eqref{Con} and $\lambda\in \mathbb{D}$ satisfying $\theta(\lambda)=0.$ So we further proceed to describe the point spectrum and corresponding eigenvectors of $A_v$  on $K_\theta$ with $\theta(0)=0$ by \eqref{AJS}.

  \begin{prop}\label{prop ps}  Let $A_v:=A_{|h|^2 z}^\theta$ be the truncated Toeplitz operator on  a model space $K_\theta$ with inner function $\theta$, $\theta(0)=0$ and $v=\la \theta, |h|^2 \ra$. Then its point spectrum is  given by $$\sigma_p(A_v)=\{\lambda\in \mathbb{D}:\;\theta(\lambda)=v\}.$$ For each $\lambda\in \sigma_p(A_v)$, the corresponding eigenvector is $$Ck_\lambda^\theta=(\theta(z)-v)/(z-\lambda),$$ where $C$ is the conjugation on $K_\theta.$\end{prop}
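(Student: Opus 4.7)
The plan is to transport the known point-spectrum description of the classical compressed shift from $K_{\theta_v}$ back to $K_\theta$ via the Crofoot transform $J_v$, using the unitary equivalence $J_v A_v = S_{\theta_v} J_v$ from diagram \eqref{AJS} established in Lemma \ref{uni-model}.

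First I would invoke \eqref{point} for $S_{\theta_v}$ on $K_{\theta_v}$, which gives
\[
\sigma_p(S_{\theta_v}) = \{\lambda \in \mathbb{D} : \theta_v(\lambda) = 0\}.
\]
From the definition \eqref{thetav} of the Frostman shift, $\theta_v(\lambda) = 0$ if and only if $\theta(\lambda) = v$. Since unitary equivalence preserves the point spectrum, this immediately yields $\sigma_p(A_v) = \{\lambda \in \mathbb{D} : \theta(\lambda) = v\}$, proving the first assertion.

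For the eigenvectors, I would use \eqref{eigek} applied to $S_{\theta_v}$: for $\lambda \in \sigma_p(S_{\theta_v})$, the eigenspace is spanned by $C_{\theta_v} k_\lambda^{\theta_v}$. By \eqref{tilde}, and using $\theta_v(\lambda) = 0$, this equals
\[
\widetilde{k_\lambda^{\theta_v}}(z) = \frac{\theta_v(z)}{z - \lambda} = \frac{\theta(z) - v}{(z-\lambda)(1 - \ol{v}\theta(z))}.
\]
From the commutative diagram, if $S_{\theta_v} g = \lambda g$ then $A_v(J_v^{-1} g) = \lambda J_v^{-1} g$, so the eigenvector of $A_v$ at $\lambda$ is $J_v^{-1}\bigl(\widetilde{k_\lambda^{\theta_v}}\bigr)$. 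Applying the inverse Crofoot transform \eqref{inver}, a brief computation using
\[
1 + \ol{v}\theta_v = \frac{1-|v|^2}{1 - \ol{v}\theta}
\]
shows that the factor $1 - \ol{v}\theta(z)$ cancels, leaving a scalar multiple of $(\theta(z) - v)/(z - \lambda)$. Since $\theta(\lambda) = v$ on the spectrum, this is precisely $(\theta(z) - \theta(\lambda))/(z - \lambda) = Ck_\lambda^\theta$ by \eqref{tilde}.

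The only point requiring any care is the algebraic simplification of $J_v^{-1}\bigl(\widetilde{k_\lambda^{\theta_v}}\bigr)$, but it is short and self-contained once the identity for $1 + \ol{v}\theta_v$ is used; everything else is a direct consequence of Lemma \ref{uni-model} together with the classical facts \eqref{point} and \eqref{eigek} for $S_{\theta_v}$.
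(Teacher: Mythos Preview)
Your proposal is correct and follows essentially the same route as the paper's proof: both use the unitary equivalence $J_v A_v = S_{\theta_v} J_v$ from \eqref{AJS} to pull back the known point spectrum \eqref{point} and eigenvectors \eqref{eigek} of $S_{\theta_v}$, and then compute $J_v^{-1}(Ck_\lambda^{\theta_v})$ via \eqref{inver} to obtain a scalar multiple of $Ck_\lambda^\theta$. The only cosmetic difference is that you state the intermediate identity $1+\ol{v}\theta_v=(1-|v|^2)/(1-\ol{v}\theta)$ explicitly, whereas the paper absorbs it into a one-line calculation.
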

\begin{proof} By  \eqref{AJS}, we have the unitary equivalence
\begin{align} S_{\theta_v}=J_vA_vJ_v^{-1},\label{asj} \end{align} where $S_{\theta_v}$ denotes the compressed shift on $K_{\theta_v}.$ This immediately yields  $\sigma_p(A_v)=\sigma_p(S_{\theta_v}).$ Applying  \eqref{point} to the Frostman shift $\theta_v$ defined in \eqref{thetav}, we deduce \begin{align*} \sigma_p(S_{\theta_v})=\sigma(\theta_v)\cap \mathbb{D}=\{\lambda\in \mathbb{D}:\;\theta_v(\lambda)=0\}=\{\lambda\in \mathbb{D}:\;\theta(\lambda)=v\}.  \end{align*}
This has also been established in \cite[Theorem 14.1]{Sarason1}.

For the eigenvector, observe that by \eqref{eigek}, we have $S_{\theta_v}(Ck_\lambda^{\theta_v})=\lambda (Ck_\lambda^{\theta_v}).$ Combining this with the unitary equivalence \eqref{asj} yields
\begin{align}  A_v J_v^{-1}(Ck_\lambda^{\theta_v})=\lambda J_v^{-1} (Ck_\lambda^{\theta_v}).\label{jajck} \end{align} Through direct calculations using \eqref{inver}, \eqref{thetav} and \eqref{tilde}, we derive that\begin{align}J_v^{-1}(Ck_\lambda^{\theta_v})
&=J_v^{-1}\left(\frac{\theta_v(z)-0}{z-\lambda}\right)
=\frac{\sqrt{1-|v|^2}}{1+\ol{v}\theta_v}\frac{\theta_v(z) }{z-\lambda}\nonumber\\&=\frac{1}{\sqrt{1-|v|^2}} \frac{\theta-v}{z-\lambda}=\frac{1}{\sqrt{1-|v|^2}}Ck_\lambda^\theta.\label{jvk}
\end{align} Substituting \eqref{jvk} into \eqref{jajck}, we complete the proof.
\end{proof}
Applying $B_v=(A_v)^*$ and $B_v=CA_vC$,   a remark follows on $B_v$'s  point spectrum.

\begin{rem} \label{rem ps}  Let $B_v:=A_{|h|^2 \bar{z}}^\theta$ be the adjoint truncated Toeplitz operator on a model space $K_\theta$ with  inner function $\theta$, $\theta(0)=0$ and $v=\la \theta, |h|^2 \ra$.  Then the point spectrum is given by $$\sigma_p(B_v)=\{\ol{\lambda}\in \mathbb{D}:\;\theta(\lambda)=v\}.$$ For each $\ol{\lambda}\in \sigma_p(B_v)$, the corresponding eigenvector is $k_\lambda^\theta.$ \end{rem}

Two illustrative examples are provided to demonstrate the eigenvalues and eigenvectors of  $A_v$ and $B_v$.
\begin{exm} (1)\;  Let $\theta(z)=z^2\frac{z-a}{1-\ol{a}z}$ when $0<|a|<1.$ Since $K_\theta=K_z\oplus zK_{z\frac{z-a}{1-\ol{a}z}},$ we take the basis  $e_1(z)=1,\;e_2(z)=z/(1-\ol{a}z)$ and $e_3(z)=z^2/(1-\ol{a}z).$
Then it holds that $$A_v e_1=z=e_2-\ol{a}e_3,\;A_ve_2=e_3,\; A_ve_3=A_v(ae_2+\theta/z)=ae_3
+ve_1.$$ So  $A_v$ is represented by the matrix
\begin{align*}[A_v]=\left(\begin{array}{ccc}0 & 0 & v\\
1 & 0& 0\\ -\ol{a} & 1 & a    \end{array}\right).\end{align*}
Its eigenvalues  satisfy $(\lambda-a)\lambda^2-v(1-\lambda\ol{a})=0$ or $\theta(\lambda)=v.$ And let  $[B_v]=[A_v]^*$, we deduce that \begin{align*} ([B_v]-\ol{\lambda}I)k_{\lambda}^\theta
&=\left(\begin{array}{ccc}-\ol{\lambda} & 1 &-{a}\\
0 & -\ol{\lambda}& 1\\ \ol{v} &0 &-\ol{\lambda}+\ol{a}   \end{array}\right) \left(\begin{array}{c}1\\ \frac{\ol{\lambda}}{1-a \ol{\lambda}}\\
\frac{\ol{\lambda}^2}{1-a \ol{\lambda}} \end{array}\right)
 =\left(\begin{array}{c}-\ol{\lambda}+\frac{\ol{\lambda}}{1-a \ol{\lambda}}-\frac{a\ol{\lambda}^2}{1-a \ol{\lambda}}\vspace{1mm}\\ -\frac{\ol{\lambda}^2}{1-a \ol{\lambda}}+\frac{\ol{\lambda}^2}{1-a \ol{\lambda}}\vspace{1mm}\\ \ol{v}+
\frac{\ol{\lambda}^2( \ol{a}-\ol{\lambda})}{1-a \ol{\lambda}} \end{array}\right)=0 \end{align*}
since $\theta(\lambda)=v.$ Thus $k_\lambda^\theta$ is   an eigenvector of $B_v$  on $K_\theta$ with respect to the eigenvalue $\ol{\lambda}$ and then $Ck_\lambda^\theta$ is   an eigenvector of $A_v$  on $K_\theta$ with respect to the eigenvalue $\lambda$.

\vspace{1mm}

(2)\;  Let $\theta(z)=z\left(\frac{z-a}{1-\ol{a}z}\right)^2$ when $0<|a|<1.$ Since $K_\theta= K_{z\frac{z-a}{1-\ol{a}z}}\oplus z\frac{z-a}{1-\ol{a}z}K_{ \frac{z-a}{1-\ol{a}z}} ,$   the basis is  $e_1(z)=1/(1-\ol{a}z),\;e_2(z)=z/(1-\ol{a}z)$ and $e_3(z)=z(z-a)/(1-\ol{a}z)^2.$  Then we deduce that $$A_ve_1=e_2,\;A_ve_2=-\ol{a}ve_1+(a+(\ol{a})^2v)e_2+(1-|a|^2)e_3,\; A_ve_3=ve_1-\ol{a}ve_2+ae_3.$$ That is, $A_v$ behaves as the following matrix
\begin{align*}[A_v]=\left(\begin{array}{ccc}0 &-\ol{a}v & v\\
1 & a+(\ol{a})^2v& -\ol{a}v\\ 0 & 1-|a|^2 & a    \end{array}\right).\end{align*}
Meanwhile, for the adjoint $[B_v]=[A_v]^*$, it holds that\begin{align*} ([B_v]-\ol{\lambda}I)k_{\lambda}^\theta
&=\left(\begin{array}{ccc}-\ol{\lambda} & 1 &0\\
-a\ol{v} & -\ol{\lambda}+\ol{a}+a^2\ol{v}& 1-|a|^2\\ \ol{v} &-a\ol{v} &-\ol{\lambda}+\ol{a}   \end{array}\right) \left(\begin{array}{c} \frac{1}{1-a \ol{\lambda}}\\ \frac{\ol{\lambda}}{1-a \ol{\lambda}}\\
\frac{\ol{\lambda}(\ol{\lambda}-\ol{a})}{(1-a \ol{\lambda})^2} \end{array}\right)\\&=\left(\begin{array}{c}0
\vspace{1mm} \\- a\ol{v} +\ol{\lambda}\frac{\ol{a}-\ol{\lambda}} {1-a \ol{\lambda}}\frac{1-a\ol{\lambda}-1+|a|^2}{1-a \ol{\lambda}} \vspace{1mm}\\ \ol{v} -\ol{\lambda}\frac{(\ol{\lambda}-\ol{a})^2}
{(1-a\ol{\lambda})^2} \end{array}\right)  =\left(\begin{array}{c}0
\vspace{1mm} \\- a\ol{v} + a\ol{\lambda}\frac{(\ol{a}-\ol{\lambda})^2} {(1-a \ol{\lambda})^2} \vspace{1mm}\\0\end{array}\right)= 0 \end{align*}
since  $\theta(\lambda)=v.$ This also exhibits Proposition \ref{prop ps} and Remark \ref{rem ps}.
\end{exm}

Proposition \ref{prop ps}, along with \eqref{SM}, gives the point spectrum and eigenvectors of $A_z^{\ma{M}}$ on $hK_\theta$.
\begin{thm}\label{thm ps}  Let $A_z^\ma{M}$ be the compressed shift on a nearly $S^*$-invariant subspace $\ma{M}=hK_\theta$, where $h$ is an extremal function and $\theta$ is the associated inner function satisfying $\theta(0)=0.$ Then $ \sigma_p(A_z^\ma{M})=\{\lambda\in \mathbb{D}:\;\theta(\lambda)= v \}$ with $v=\la \theta, |h|^2 \ra$. For each $\lambda\in \sigma_p(A_z^\ma{M}),$ the corresponding eigenvector is $h(Ck_\lambda^\theta)=h\widetilde{k_\lambda^\theta}.$\end{thm}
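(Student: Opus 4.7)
The plan is to reduce this theorem directly to Proposition \ref{prop ps} via the unitary equivalence \eqref{SM}, namely $A_z^{\ma{M}} = M_h A_v M_h^*$, where $A_v := A_{|h|^2 z}^\theta$ and $v = \langle \theta, |h|^2 \rangle$. By Theorem \ref{thm Hitt}, the multiplier $M_h : K_\theta \to h K_\theta = \ma{M}$ is a surjective isometry and hence a unitary with adjoint $M_h^* = M_{h^{-1}}$. In particular this equivalence preserves the point spectrum and transports eigenvectors bijectively. So I would begin by invoking \eqref{SM} and recording that $\sigma_p(A_z^\ma{M}) = \sigma_p(A_v)$.

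Next, I would apply Proposition \ref{prop ps} (which is available under $|v| < 1$, as the theorem already assumes) to conclude that
\[
\sigma_p(A_z^\ma{M}) = \sigma_p(A_v) = \{\lambda \in \mathbb{D} : \theta(\lambda) = v\},
\]
giving the first assertion at once.

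For the eigenvector description, if $A_v f = \lambda f$ with $f \in K_\theta$, then applying $M_h$ to both sides and using $M_h A_v = A_z^\ma{M} M_h$ (an immediate rewrite of \eqref{SM}) yields $A_z^\ma{M}(M_h f) = \lambda(M_h f)$. Since $M_h$ is unitary, the map $f \mapsto hf$ identifies $\ker(A_v - \lambda I)$ with $\ker(A_z^\ma{M} - \lambda I)$. By Proposition \ref{prop ps}, for $\lambda$ in the point spectrum the eigenvector in $K_\theta$ is $C k_\lambda^\theta = (\theta(z)-v)/(z-\lambda)$, which under the notation of \eqref{tilde} equals $\widetilde{k_\lambda^\theta}$. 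Therefore the corresponding eigenvector in $\ma{M}$ is $h \cdot C k_\lambda^\theta = h \widetilde{k_\lambda^\theta}$, as claimed.

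There is no genuine obstacle: the whole theorem is a direct corollary of earlier work, and the only item worth stating explicitly is the unitarity of $M_h$, which ensures the correspondence between eigenspaces is a bijection so that no eigenvectors are gained or lost in the transport from $K_\theta$ to $\ma{M}$.
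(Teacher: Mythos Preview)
Your proposal is correct and is precisely the paper's approach: the paper states Theorem~\ref{thm ps} as an immediate consequence of Proposition~\ref{prop ps} together with the unitary equivalence \eqref{SM}, without giving a separate proof. Your write-up simply makes explicit the standard fact that a unitary conjugation preserves the point spectrum and carries eigenvectors to eigenvectors, which is exactly what the paper leaves implicit.
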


Considering $A_v$ as a one-dimensional perturbation of $S_\theta$, its spectrum must be the union of the  non-Fredholm points of $S_\theta$ and its own eigenvalues, that is,  $$ \sigma(A_v)=\sigma_p(A_v)\cup\sigma_{e}(S_\theta)=\{\lambda\in \mathbb{D}:\;\theta(\lambda)= v \}\cup (\sigma(\theta)\cap\mathbb{T}),$$ which is a special case of \cite[Theorem 14.1]{Sarason1} with the inner function vanishing at zero.  Therefore, the preceding analysis enables us to derive a complete description for the spectrum of $A_z^\ma{M}$ on a nearly $S^*$-invariant subspace $\ma{M}=hK_\theta$.

\begin{thm}\label{thm whole}  Let $A_z^\ma{M}$ be the compressed shift on a nearly $S^*$-invariant subspace $\ma{M}=hK_\theta$, where $h$ is an extremal function and $\theta$ is the associated inner function satisfying $\theta(0)=0.$ Then $$ \sigma(A_z^\ma{M})=\{\lambda\in \mathbb{D}:\;\theta(\lambda)= v \}\cup (\sigma(\theta)\cap\mathbb{T})$$  where $v=\la \theta, |h|^2 \ra$.\end{thm}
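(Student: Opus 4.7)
The plan is to reduce the problem to computing $\sigma(A_v)$ on the model space $K_\theta$, where $A_v := A_{|h|^2 z}^\theta$ and $v = \la \theta, |h|^2 \ra$. Indeed, equation \eqref{SM} together with the fact that $M_h : K_\theta \to \ma{M}$ is unitary (Theorem \ref{thm Hitt}) yields $A_z^\ma{M} = M_h A_v M_h^*$, so $\sigma(A_z^\ma{M}) = \sigma(A_v)$. Since the case $v = 0$ reduces to the classical Liv\v{s}ic--Moeller theorem for $S_\theta$, and $|v| = 1$ is excluded by hypothesis, it suffices to treat $0 < |v| < 1$.

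For $\sigma(A_v)$ I would combine the earlier results as follows. Corollary \ref{cor ess} gives $\sigma_e(A_v) = \sigma(\theta) \cap \mathbb{T}$, which accounts for the boundary part of the spectrum. Next, Proposition \ref{Aztheta} exhibits $A_v$ as a rank-one (hence compact) perturbation of $S_\theta$, so the Fredholm index is stable: for every $\lambda \in \mathbb{D}$, $A_v - \lambda I$ inherits from $S_\theta - \lambda I$ the property of being Fredholm of index zero, and is therefore invertible if and only if it is injective. Consequently $\sigma(A_v) \cap \mathbb{D} = \sigma_p(A_v) \cap \mathbb{D}$, which Proposition \ref{prop ps} identifies as $\{\lambda \in \mathbb{D} : \theta(\lambda) = v\}$. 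Taking the union with the essential part delivers the desired formula, and pulling back via $M_h$ transports the identity to $A_z^\ma{M}$.

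A conceptually cleaner alternative is to invoke Lemma \ref{uni-model}: the unitary equivalence $A_v \cong S_{\theta_v}$ (with $\theta_v$ the Frostman shift of $\theta$) combined with the Liv\v{s}ic--Moeller theorem yields $\sigma(A_v) = \sigma(\theta_v)$, after which one identifies the zeros of $\theta_v$ in $\mathbb{D}$ with the level set $\{\theta = v\} \cap \mathbb{D}$ and checks that $\sigma(\theta_v) \cap \mathbb{T} = \sigma(\theta) \cap \mathbb{T}$. The main obstacle, in either route, is the verification that the Frostman shift preserves the set of boundary singularities; this comes from $|\theta| = 1$ on $\mathbb{T}$ together with $|v| < 1$, which forces $1 - \bar{v}\theta$ to be bounded away from zero and the M\"obius factor $1/(1 - \bar{v}\theta)$ to be analytic across any boundary arc where $\theta$ continues analytically (and conversely), so the loci of analytic continuation of $\theta$ and $\theta_v$ coincide.
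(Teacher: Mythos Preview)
Your primary argument---unitary equivalence via $M_h$ to reduce to $A_v$, then the rank-one perturbation of $S_\theta$ combined with Fredholm index stability to identify $\sigma(A_v)\cap\mathbb{D}$ with $\sigma_p(A_v)$ from Proposition~\ref{prop ps}, together with Corollary~\ref{cor ess} for the boundary piece---is exactly the route the paper takes (it phrases this as ``the union of the non-Fredholm points of $S_\theta$ and its own eigenvalues'' and cites \cite[Theorem~14.1]{Sarason1}). Your alternative via Lemma~\ref{uni-model} and Liv\v{s}ic--Moeller for $S_{\theta_v}$ is a legitimate variant that the paper sets up but does not invoke explicitly for this theorem; your observation that $1-\bar v\theta$ is bounded away from zero on $\mathbb{T}$ is precisely what is needed to match $\sigma(\theta_v)\cap\mathbb{T}$ with $\sigma(\theta)\cap\mathbb{T}$.
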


\section{Invariant subspaces of the compressed shifts on nearly invariant subspaces}
In this section, we aim to characterize the invariant subspaces of our c.n.u. operator $A_v$ or $B_v$  on  a model space $K_\theta$ with $\theta(0)=0$. As an application, we completely solve {\bf Question 1} by describing all the invariant subspaces of the compressed shift $A_z^\ma{M}$ on arbitrary nearly $S^*$-invariant subspace $\ma{M}=hK_\theta$.

For $f\in K_\theta$ with $\theta(0)=0,$ \eqref{AHZ*} demonstrates $$B_vf =S^*f+\ol{v}  f(0) \theta/z.$$
Write $B_v=S^*+F$ with $F(f)=\ol{v}f(0)\theta/z$ for short.

 Suppose $\ma{N}\subseteq K_\theta$ is an invariant subspace for $B_v$, and that $f\in \ma{N}$ with $f(0)=0$.
Then $S^*f=B_vf-F f=B_vf\in \ma{N}.$ So $\ma{N}$ is a nearly $S^*$-invariant subspace and by Hitt's theorem (Theorem \ref{thm Hitt}) it can be written as $gK_\phi$ with unit norm $g\in \ma{N}$   orthogonal to $\ma{N}\cap zH^2(\mathbb{D})$ and $\phi$ is inner such that $\phi(0)=0.$
So we next  explore when $gK_\phi\subseteq K_\theta$, and when it is invariant under $B_v$.

We note $gK_\phi$ is a nearly $S^*$-invariant subspace, then it is obvious
that $g\in \ma{C}(K_\phi),$ which means that $|g|^2dm$ is a {\em Carleson measure\/} for $K_\phi,$ that is, $g K_\phi\subseteq L^2(\mathbb{T}).$

Motivated by the above questions, we recall  the result  from \cite{FHR} or the more general one in \cite{CaP3}.\vspace{1mm}

\textbf{Theorem A }\cite[Theorem 3.1]{FHR} For inner $\phi$ and $\theta$ and $g\in H^2,$  the following are equivalent:

$(i)$  $gK_\phi\subseteq K_\theta$;

$(ii)$ $g S^*\phi\in  K_\theta,$ and $g\in \mathcal{C}(K_\phi);$

$(iii)$ $g\in \ker T_{\ol{z}\ol{\theta}\phi }\cap\mathcal{C}(K_\phi).$
\vspace{1mm}

Thus Theorem A entails us to have $gK_\phi\subseteq K_\theta$ if and only if $gS^*\phi=g \phi/z\in K_\theta,$ which, since $\phi(0)=0,$ is the same as $g\ol{z}\phi \ol{\theta}\in \ol{zH^2}$, or $\ol{g}\ol{\phi}\theta \in H^2.$

Next we consider $B_v(gK_\phi)\subseteq gK_\phi$. For $k\in K_\phi,$ it holds that
\begin{align*}
B_v(gk)&=S^*(gk)+\ol{v}g(0)k(0)\theta/z\\&=\frac{g(z)k(z)-g(0)k(0)+ \ol{v}g(0)k(0)\theta}{z} \\&
=\frac{g(z)(k(z)-k(0))}{z}+\frac{(g(z)-g(0))k(0)}{z}+\frac{ \ol{v}g(0)k(0)\theta}{z}
\\&=\frac{g(z)(k(z)-k(0))}{z}+k(0)B_vg.
\end{align*}
The first term is always in $gK_\phi,$ and without loss of generality we may take
$k(0)=1$ (for if $k(0)= 0$ there is nothing to prove). So a necessary and
sufficient condition is $B_vg \in gK_\phi. $ In sum, the above steps present  a characterization for invariant subspaces of $B_v$ on $K_\theta$ with $\theta(0)=0.$
\begin{thm}\label{thmiS} Let $\ma{N}$ be an invariant subspace of  $B_v$   on $K_\theta$ with $\theta(0)=0$, then $\ma{N}$ is nearly $S^*$-invariant and then $\ma{N}=gK_\phi$ is an invariant subspace for $B_v$ if and only if $\theta \ol{g\phi}\in H^2$ and $(B_vg)/g \in K_\phi$.\end{thm}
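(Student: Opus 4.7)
The plan is to follow the outline sketched in the paragraphs immediately preceding the theorem, with the argument splitting cleanly into three steps: near $S^*$-invariance of $\ma{N}$, the containment $gK_\phi \subseteq K_\theta$, and the actual $B_v$-invariance.

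First I would verify near $S^*$-invariance, which is immediate from the rank-one perturbation formula $B_v = S^* + F$ with $F(f) = \bar v f(0) \theta/z$: if $f \in \ma{N}$ satisfies $f(0) = 0$ then $Ff = 0$, so $S^* f = B_v f \in \ma{N}$. Hitt's theorem (Theorem \ref{thm Hitt}) then provides a factorisation $\ma{N} = gK_\phi$ with $g$ the extremal function of $\ma{N}$ and $\phi$ inner with $\phi(0) = 0$ (the trivial case $K = \{0\}$ being handled separately).

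Second, I would translate the ambient containment $gK_\phi \subseteq K_\theta$ into the divisibility condition $\theta \overline{g\phi} \in H^2$ by invoking Theorem A. Since $\phi(0) = 0$ we have $S^*\phi = \phi/z$, so condition (ii) reads $g\phi/z \in K_\theta$; a quick manipulation on the circle turns this into $g\bar z \phi \bar\theta \in \overline{zH^2}$, which is the same as $\theta\overline{g\phi} \in H^2$. The Carleson condition $g \in \ma{C}(K_\phi)$ is automatic once one knows $gK_\phi \subseteq K_\theta \subseteq L^2$.

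Third, for the $B_v$-invariance I would use the decomposition already laid out: for $k \in K_\phi$,
\begin{equation*}
B_v(gk) \;=\; \frac{g(z)\bigl(k(z)-k(0)\bigr)}{z} + k(0)\, B_v g,
\end{equation*}
where the first summand belongs to $gK_\phi$ because $(k - k(0))/z = S^*k \in K_\phi$. Thus $B_v(gK_\phi) \subseteq gK_\phi$ reduces to the single requirement $B_v g \in gK_\phi$, i.e.\ $(B_v g)/g \in K_\phi$ (it suffices to test on $k$ with $k(0) \neq 0$, and normalise to $k(0) = 1$). Combining the three steps yields the stated equivalence.

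The main obstacle I anticipate is ensuring that $(B_v g)/g$ is well-defined as an element of $K_\phi$ rather than merely a meromorphic function: one must justify that the quotient of the $H^2$-function $B_v g$ by the (possibly vanishing) extremal function $g$ lies in $K_\phi$. This is handled by the fact that $M_g \colon K_\phi \to gK_\phi$ is an isometric multiplier (as in Theorem \ref{thm Hitt}), so division by $g$ makes sense on the range of $M_g$; the condition $B_v g \in gK_\phi$ is then equivalent to $M_g^{-1} B_v g \in K_\phi$, which is the content of $(B_v g)/g \in K_\phi$.
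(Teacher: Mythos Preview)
Your proposal is correct and follows essentially the same approach as the paper: the argument in the paragraphs preceding the theorem \emph{is} the paper's proof, and you have reproduced it faithfully, including the rank-one perturbation step for near $S^*$-invariance, the appeal to Theorem~A for the containment condition $\theta\overline{g\phi}\in H^2$, and the decomposition $B_v(gk)=g\,(k-k(0))/z + k(0)\,B_vg$ reducing invariance to $B_vg\in gK_\phi$. Your added remark on interpreting $(B_vg)/g$ via the isometry $M_g$ is a sensible clarification that the paper leaves implicit.
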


 The finite-dimensional invariant subspaces of $A_v$ or $B_v$ acting on a model space $K_\theta$ exhibit a simpler structure than others. This is particularly evident when the operator  possesses at least one eigenvalue, as the corresponding eigenspace constitutes a nontrivial invariant subspace.

 However, if $\theta$ is a Blaschke product of degree greater than $ 2$ and some eigenvalue has multiplicity $n> 1$, then the generalized eigenspaces also serve as invariant subspaces. More precisely, suppose $\ol{\lambda}$ is an eigenvalue of $B_v$ with multiplicity $n> 1$, then $\ker(B_v-\ol{\lambda})^2$ is  an invariant subspace of $B_v$, which need not coincide with the eigenspace. Consequently,  generalized eigenvectors are necessary to  describe the invariant subspaces, particularly for high-order kernels such as $\ker (B_v-\ol{\lambda}I)^{n+1},\;n\geq 1.$  Analogous results hold for the operator $A_v$.

Now, for a function $ f:\;\mathbb{C}\rightarrow \mathbb{C},$  let $Z(f)$ denote the zero set of $f$, i.e., the collection of all $z\in \mathbb{C}$ such that $f(z)=0$. The following theorem addresses   the location of   $Z(B')$ for a Blaschke product (not necessarily finite) $B$.\vspace{1mm}

\textbf{Theorem B}  \cite[Theorem 2.1]{CC} Let $B$ be a Blaschke product. Then the set $Z(B')\cap \ol{\mathbb{D}}$ is included in the closed convex hull of $Z(B)\cup\{0\}.$\vspace{1mm}

According to Theorem B, when $\theta$ is a Blaschke product of degree $n\geq 2,$ there exists  $v\in \mathbb{D}$   such that the equation $\theta(\lambda)=v$ has a multiple root. Consequently, the eigenvalue $\ol{\lambda}$ of   $B_v$ has algebraic multiplicity greater than $1$. For example, taking $\theta(z)=z\frac{z-\frac{1}{2}}{1-\frac{z}{2}}$ and $v=4\sqrt{3}-7,$ the equation $\theta(\lambda)-v=0$ has a double root $\lambda=2-\sqrt{3},$ which means $B_v$ on $K_\theta$ has an eigenvalue  $2-\sqrt{3}$ with  multiplicity 2.  More generally, for a degree-2 Blaschke product $\theta$,  the eigenvectors of $B_v$ corresponding to a double eigenvalue $\ol{\lambda}$ are precisely scalar multiples of $k_\lambda^\theta.$

\begin{exm}\label{degree2}  Let $\theta(z)=z\frac{z-a}{1-\ol{a}z}$ when $0<|a|<1.$ Write $e_1(z)=1/(1-\ol{a}z)$ and $e_2(z)=z/(1-\ol{a}z).$ So $A_v e_1=e_2$ and  $A_ve_2=ve_1+(a-v\ol{a})e_2.$
Therefore $A_v$ behaves as the matrix
\begin{align*}[A_v]=\left(\begin{array}{cc}0 & v\\
1 & a-v\ol{a}    \end{array}\right).\end{align*}
The eigenvalues satisfy  $\theta(\lambda)=v$ and $\sigma(A_v)=\sigma_p(A_v)=\{\lambda\in \mathbb{D}:\;\theta(\lambda)=v\}.$  Suppose  $x_1e_1+x_2e_2$ is the eigenvector of $B_v$ with respect to $\ol{\lambda}$ and then it holds that
\[
\left( \begin{array}{cc}
\overline{\lambda} & -1 \\
-\overline{v} & \overline{\lambda} - \overline{a} + \overline{v} a
\end{array} \right) \left( \begin{array}{c}
x_1 \\
x_2
\end{array} \right) = 0.
\]
Solving the equations, it yields that the eigenvector   \(x_1e_1+x_2 e_2= tk_{\lambda}^{\theta} \) with $t\in \mathbb{C}$.
\end{exm}
Going beyond  Example \ref{degree2},  let us consider the case where the eigenvalue $\ol{\lambda}$ of $B_v$ has multiplicity $n+1$ for some $n\geq 2$, with $\theta(\lambda)=v$. And then $\theta^{(k)}(\lambda)=0$ for $k=1, \cdots, n.$ We need to formulate the $n$-th derivative kernel $(k_{\lambda}^\theta)^{(n)}$ on $K_\theta$ by the $n$-th derivative kernel in $H^2$ as below
$$k_{\lambda}^{(n)}(z)=\frac{n!z^n}{(1-\ol{\lambda}z)^{n+1}}$$ satisfying $\la f, k_{\lambda}^{(n)}\ra=f^{(n)}(\lambda)$ for all $f\in H^2.$ And then \begin{align*}(k_{\lambda}^\theta)^{(n)}(z)&=\la k_{\lambda}^{(n)}(w),k_{z}^\theta(w)\ra=\la  \frac{n!w^n}{(1-\ol{\lambda}w)^{n+1}}, \frac{1-\ol{\theta(z)}\theta(w)}{1-\ol{z}w}\ra\\&=
\frac{n!z^n}{(1-\ol{\lambda}z)^{n+1}}-\theta(z)\la \frac{n!w^n}{(1-\ol{\lambda}w)^{n+1}}, \frac{\theta(w)}{1-\ol{z}w}\ra\\&=
\frac{n!z^n}{(1-\ol{\lambda}z)^{n+1}}-\theta(z)\sum_{k=0}^nC_n^k
\frac{k!z^{k}}{(1-\ol{\lambda}z)^{k+1}}\ol{\theta^{(n-k)}
(\lambda)}. \end{align*}

Since $\theta(\lambda)=v$ and $\theta^{(k)}(\lambda)=0$ for $k=1,\cdots,n,$ it yields that
\begin{align}(k_\lambda^\theta)^{(n)}(z)=\frac{n!z^n
(1-\ol{v}\theta)}
{(1-\ol{\lambda}z)^{n+1}},\label{nn}\end{align} ensuring $\la f, (k_\lambda^\theta)^{(n)}\ra=f^{(n)}(\lambda)$ for all $f\in K_\theta.$ At this time, the action of $i$-th iteration of $B_v$ is
\begin{align*} B_v^{i}(k_\lambda^\theta)^{(n)}(z)=\frac{n!z^{n-i}(1-\ol{v}\theta)}
{(1-\ol{\lambda}z)^{n+1}},\;\;i=1, 2,\cdots, n. \end{align*} And the action of $n+1$-st iteration of $B_v$ is\begin{align*}B_v^{n+1}(k_\lambda^\theta)^{(n)}(z)&=
n!S^*\left(\frac{ 1-\ol{v}\theta }{(1-\ol{\lambda}z)^{n+1}} \right)+\ol{v}\theta/z\\&=n!\frac{(1-\ol{v}\theta)
(1-(1-\ol{\lambda}z)^{n+1})}{z(1-\ol{\lambda}z)^{n+1}}.\end{align*}
Based on the above calculations, we deduce that \begin{align*}&\quad\quad (B_v-\ol{\lambda}I)^{n+1}(k_\lambda^\theta)^{(n)}(z)\\&=
\sum_{i=0}^{n+1} C_{n+1}^i (-\ol{\lambda})^{n+1-i}B_v^{i}(k_\lambda^\theta)^{(n)}(z) \\&=n!\sum_{i=0}^{n } C_{n+1}^i (-\ol{\lambda})^{n+1-i}\frac{z^{n-i}(1-\ol{v}\theta)}
{(1-\ol{\lambda}z)^{n+1}}+n! \frac{(1-\ol{v}\theta)
(1-(1-\ol{\lambda}z)^{n+1})}{z(1-\ol{\lambda}z)^{n+1}}
\\&= n!\frac{1-\ol{v}\theta}{(1-\ol{\lambda}z)^{n+1}}\left(
-\ol{\lambda}\sum_{i=0}^{n} C_{n+1}^i  (-\ol{\lambda}z)^{n -i}
 + (1-(1-\ol{\lambda}z)^{n+1})/z \right)\\&=n!\frac{1-\ol{v}\theta}{(1-\ol{\lambda}z)^{n+1}} \left(
-\ol{\lambda}\sum_{i=0}^{n} C_{n+1}^i  (-\ol{\lambda}z)^{n -i}
 + \ol{\lambda}\sum_{i=0}^{n } C_{n+1}^i(-\ol{\lambda}z)^{n-i} \right)= 0.\end{align*}
This  implies the $n$-th derivative kernel $(k_{\lambda}^\theta)^{(n)}$ for $\lambda$ in \eqref{nn} is  a generalized eigenvector of $B_v$ when the eigenvalue $\ol{\lambda}$ has multiplicity $n+1$. That is,
\begin{prop}\label{prop ge} Let $B_v:=A_{|h|^2 \bar{z}}^\theta$ be the adjoint truncated Toeplitz operator on a model space $K_\theta$ with  inner function $\theta$, $\theta(0)=0$ and $v=\la \theta, |h|^2 \ra$. Suppose the eigenvalue $\ol{\lambda}$ of $B_v$ has multiplicity $n+1$
for $n\geq 2$ and satisfies $\theta(\lambda)=v,$ then $(k_{\lambda}^\theta)^{(n)}=\frac{n!z^n
(1-\ol{v}\theta)}
{(1-\ol{\lambda}z)^{n+1}}$  is  a generalized eigenvector of $B_v$. \end{prop}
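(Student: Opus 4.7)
The plan is to compute $(k_\lambda^\theta)^{(n)}$ explicitly, then verify by direct iteration that $(B_v - \ol{\lambda} I)^{n+1}$ annihilates it. I would start from the standard $H^2$ reproducing kernel for the $n$-th derivative, $k_\lambda^{(n)}(z) = n! z^n/(1-\ol{\lambda}z)^{n+1}$, and project onto $K_\theta$ via $P_\theta$. Expanding $1-\ol{\theta(z)}\theta(w)$ in the $K_\theta$ kernel and applying the Leibniz rule in $w$ produces a sum of terms proportional to $\ol{\theta^{(n-k)}(\lambda)}$; the multiplicity hypothesis $\theta(\lambda)=v$ and $\theta^{(k)}(\lambda)=0$ for $1 \le k \le n$ collapses this sum to the single closed form $(k_\lambda^\theta)^{(n)}(z) = n! z^n (1-\ol{v}\theta(z))/(1-\ol{\lambda}z)^{n+1}$ claimed in the statement.

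Next, I would use the decomposition $B_v = S^* + F$, where $F(f) = \ol{v} f(0)\theta/z$, coming from Proposition \ref{Aztheta*}. Since $(k_\lambda^\theta)^{(n)}$ is divisible by $z^n$, it vanishes to order at least $n$ at the origin; hence the rank-one perturbation $F$ contributes nothing for the first $n$ iterations, and each application of $B_v$ merely strips one power of $z$. This yields $B_v^i(k_\lambda^\theta)^{(n)}(z) = n! z^{n-i}(1-\ol{v}\theta)/(1-\ol{\lambda}z)^{n+1}$ for $1 \le i \le n$. At step $n+1$ the function acquires a nonzero constant term, so $F$ fires and produces the extra $\ol{v}\theta/z$ contribution; a short simplification gives $B_v^{n+1}(k_\lambda^\theta)^{(n)}(z) = n!(1-\ol{v}\theta)(1-(1-\ol{\lambda}z)^{n+1})/(z(1-\ol{\lambda}z)^{n+1})$.

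Finally, I would expand $(B_v-\ol{\lambda}I)^{n+1}$ by the binomial theorem, substitute the above iterates, and factor out the common $n!(1-\ol{v}\theta)/(1-\ol{\lambda}z)^{n+1}$. What remains is a purely polynomial identity in $-\ol{\lambda}z$, which reduces to the algebraic fact $(1-(1-\ol{\lambda}z)^{n+1})/z = \ol{\lambda}\sum_{i=0}^{n}\binom{n+1}{i}(-\ol{\lambda}z)^{n-i}$; this exactly cancels the $i = 0,\ldots,n$ contributions from the first $n+1$ iterates, yielding zero.

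The main obstacle is the combinatorial bookkeeping at the final step: one must match the binomial coefficients arising from $(B_v-\ol{\lambda}I)^{n+1}$ against those in the expansion of $1-(1-\ol{\lambda}z)^{n+1}$ and verify that the rank-one correction from $F$ fits perfectly to close the cancellation. Establishing the closed form for $(k_\lambda^\theta)^{(n)}$ in the first step is more or less automatic once the vanishing conditions on $\theta^{(k)}(\lambda)$ are used, so the real content of the argument lies in the $(n+1)$-st iterate and the binomial cancellation that follows.
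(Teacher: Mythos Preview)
Your proposal is correct and follows exactly the paper's argument: derive the closed form for $(k_\lambda^\theta)^{(n)}$ via the Leibniz rule and the vanishing of $\theta^{(k)}(\lambda)$, compute $B_v^i$ on it for $i\le n$ (pure backward shift) and for $i=n+1$ (where the rank-one term enters), then expand $(B_v-\ol{\lambda}I)^{n+1}$ binomially and observe the telescoping cancellation. There is no substantive difference between your outline and the paper's proof.
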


At this stage, we encounter another key question:
\begin{center}
  \textbf{ Question 2}: \textit{How can we characterize the nontrivial invariant subspaces for $A_v$ or $B_v$ on $K_\theta$ with $\theta(0)=0$, particularly when $\theta(\lambda)$ never takes the value $v\in \mathbb{D} $?}
\end{center}

As an example, consider the singular inner function $u(z)=\exp(\frac{z-1}{z+1})$, which satisfies $u(0)=1/e$ and never vanishes.  Defining its Frostman shift of   $u$ as $$\theta(z):= \frac{u(z)-1/e}{1-u(z)/e}=u_{\frac{1}{e}}(z),$$ we observe $\theta$ is inner with $\theta(0)=0$ that never attains the value $v:=-1/e$.  If the spectrum of $A_v$ or $B_v$ is disconnected,   invariant subspaces necessarily exist. In this example, however,  the spectrum consists of a single point on the circle, leaving Question 2 unsolved.

At this stage, the unitary equivalence illustrated in the diagram \eqref{AJS} becomes crucial. Together with the fact that the invariant subspaces of $S_{\theta_v}$ in $K_{\theta_v}$ are precisely those of the form $K_{\theta_v} \ominus K_\phi$ for inner divisors $\phi$ of $\theta_v$ (see, e.g.\cite[Proposition 9.14]{GMR}). This equivalence yields an explicit description of the invariant subspaces for $A_v$, which we now present.

\begin{thm} Let $A_v:=A_{|h|^2 z}^\theta$ be the truncated Toeplitz operator on a  model space $K_\theta$ with  inner function $\theta$, $\theta(0)=0$ and $v=\la \theta, |h|^2 \ra$, then the invariant subspaces of $A_v$ in $K_{\theta}$ are precisely $J_v^{-1}(K_{\theta_v} \ominus K_\phi)$, where $\phi$ divides $\theta_v$ and $J_v^{-1}:\; K_{\theta_v}\rightarrow K_\theta$ is defined in \eqref{inver}.\end{thm}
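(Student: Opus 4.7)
The plan is to transfer the classical Beurling-type classification of $S_{\theta_v}$-invariant subspaces in $K_{\theta_v}$ back to $K_\theta$ via the Crofoot-induced unitary equivalence already set up in the excerpt. Since $|v|<1$, the Crofoot transform $J_v:K_\theta\to K_{\theta_v}$ is a unitary operator and the commutative diagram \eqref{AJS} gives $J_vA_v=S_{\theta_v}J_v$, equivalently $A_v=J_v^{-1}S_{\theta_v}J_v$. This is the only nontrivial ingredient, and it has been established in Lemma \ref{uni-model}, so the remaining work is formal.

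First I would observe that any unitary conjugation induces a lattice isomorphism between closed subspaces of the two Hilbert spaces, and that this isomorphism respects invariance: for a closed subspace $\ma{N}\subseteq K_\theta$, the identity
\begin{equation*}
A_v(\ma{N})=J_v^{-1}S_{\theta_v}J_v(\ma{N})
\end{equation*}
shows that $A_v\ma{N}\subseteq\ma{N}$ holds if and only if $S_{\theta_v}(J_v\ma{N})\subseteq J_v\ma{N}$. Hence the assignment $\ma{N}\mapsto J_v\ma{N}$ is a bijection between the lattice of $A_v$-invariant closed subspaces of $K_\theta$ and that of $S_{\theta_v}$-invariant closed subspaces of $K_{\theta_v}$.

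Next I would invoke the classical Beurling-type description \cite[Proposition 9.14]{GMR}, recalled in the introduction, which asserts that the $S_{\theta_v}$-invariant subspaces of $K_{\theta_v}$ are precisely the subspaces of the form $K_{\theta_v}\ominus K_\phi$ as $\phi$ ranges over the inner divisors of $\theta_v$. Pulling these back by the unitary $J_v^{-1}:K_{\theta_v}\to K_\theta$ described in \eqref{inver} yields that every $A_v$-invariant subspace of $K_\theta$ has the form $J_v^{-1}(K_{\theta_v}\ominus K_\phi)$ with $\phi\mid \theta_v$, and conversely every such subspace is $A_v$-invariant. This completes the argument.

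The main obstacle is not really located in this final step but rather in the preceding machinery: identifying $A_v$ as a one-dimensional perturbation whose characteristic function is the scalar Frostman quotient $(\theta-v)/(1-\ol{v}\theta)$, and thereby realizing $A_v$ as unitarily equivalent to $S_{\theta_v}$ through $J_v$. Once \eqref{AJS} is in hand, the invariant subspace description is automatic from the classical model space theory, so the only care required is to verify that $\phi(0)=0$ is \emph{not} required here (unlike the standing hypothesis on $\theta$), since $\theta_v$ need not vanish at the origin and any inner divisor $\phi$ of $\theta_v$ is admissible.
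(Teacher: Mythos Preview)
Your proposal is correct and follows exactly the paper's approach: the paper itself simply invokes the unitary equivalence from the commutative diagram \eqref{AJS} together with \cite[Proposition 9.14]{GMR} and states the theorem without a separate proof environment. Your write-up makes the lattice-isomorphism step explicit, and your closing remark that $\phi(0)=0$ is not required (since the classical result applies to arbitrary inner $\theta_v$) is a helpful clarification the paper leaves implicit.
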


Finally, using \eqref{SM} we  can completely  answer \textbf{Questions 1} and \textbf{2} with the following theorem.

\begin{thm}\label{thm com} Let $A_z^\ma{M}$ be the compressed shift on a nearly $S^*$-invariant subspace $\ma{M}=hK_\theta$, where $h$ is an extremal function and $\theta$ is the associated inner function satisfying $\theta(0)=0.$ Then every $A_z^\ma{M}$-invariant subspace of $\ma{M}$ is of the form $h J_v^{-1}(K_{\theta_v} \ominus K_\phi)$, where $v=\la \theta, |h|^2 \ra$, $\phi$ is an inner function dividing the Frostman shift $\theta_v$ of $\theta$, and $J_v^{-1}:\; K_{\theta_v}\rightarrow K_\theta$ is the inverse of the Crofoot transform.\end{thm}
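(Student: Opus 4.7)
The plan is to exploit the chain of unitary equivalences $A_z^\ma{M} \sim A_v \sim S_{\theta_v}$ already assembled in the excerpt and to transport the classical Beurling-type description of invariant subspaces of $S_{\theta_v}$ on the model space $K_{\theta_v}$ back to $\ma{M}$ through both equivalences.

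First I would invoke equation \eqref{SM}, $A_z^\ma{M} = M_h A_v M_h^*$, where $M_h\colon K_\theta\to hK_\theta$ is a surjective isometry by Theorem \ref{thm Hitt}, hence unitary with $M_h^* = M_{h^{-1}}$. Since any unitary equivalence induces a lattice isomorphism between the families of invariant subspaces of the two operators, a closed subspace $\ma{L}\subseteq \ma{M}$ is $A_z^\ma{M}$-invariant if and only if $\ma{N} := M_h^*\ma{L} = h^{-1}\ma{L}$ is $A_v$-invariant in $K_\theta$, and the two are related by $\ma{L} = h\ma{N}$.

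Next I would apply the theorem stated immediately before the target, which uses the Sarason commutative diagram \eqref{AJS}, $J_v A_v = S_{\theta_v} J_v$, to identify the $A_v$-invariant subspaces of $K_\theta$ as $J_v^{-1}(\ma{K})$, where $\ma{K}$ ranges over the $S_{\theta_v}$-invariant subspaces of $K_{\theta_v}$. By \cite[Proposition 9.14]{GMR}, these are exactly $K_{\theta_v}\ominus K_\phi$ with $\phi$ inner and $\phi\mid \theta_v$. Chaining the two equivalences then yields $\ma{L} = h\, J_v^{-1}(K_{\theta_v}\ominus K_\phi)$, and conversely every subspace of this form is $A_z^\ma{M}$-invariant, which is the claim.

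The argument reduces to bookkeeping once both unitary equivalences are in hand, so there is no genuine obstacle in the final step; the real work is upstream in the construction of the diagram \eqref{AJS} via Sz.-Nagy--Foias characteristic functions, the Frostman shift $\theta_v$, and the Crofoot transform $J_v$. The hypothesis $|v|<1$ is used in an essential way here, because both $\theta_v$ and $J_v$ require $v\in\mathbb{D}$; the boundary case $|v|=1$ corresponds to the Clark unitary operator $U_v$ and would require a separate analysis outside the scope of this theorem.
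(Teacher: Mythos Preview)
Your proposal is correct and follows exactly the paper's route: the paper does not give a separate proof of Theorem~\ref{thm com} but simply invokes equation~\eqref{SM} together with the preceding theorem on the $A_v$-invariant subspaces of $K_\theta$, which is precisely the two-step unitary transport $M_h$ then $J_v$ that you describe.
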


Concluding this paper, we present an example to illustrate our main results.

\begin{exm}\label{exm si}Consider the nearly $S^*$-invariant subspace $\ma{M}=(1+z)K_{z^2}$. The unnormalized reproducing kernel at 0, denoted by \(k(z):= P_{\ma{M}}1 \), can be explicitly determined by expressing it in the form \( k(z) = (z+1)(p+qz) \) and doing evaluations at 0 so that
$$\la (z+1)(p+qz), z+1 \ra = 1 \quad \text{and} \quad \la (z+1)(p
+qz), z(z+1) \ra=0.$$ This yields that $ p = 2/3 $ and $q = -1/3$. Now $ \|k\|^{2} = \langle k, k \rangle = k(0) = 2/3$, leading to the normalized  reproducing kernel
\begin{align}
h(z) = \sqrt{\frac{3}{2}} \frac{(2-z)(z+1)}{3} = \frac{(2-z)(z+1)}{\sqrt{6}}.\label{h}\end{align} Now the model space \( K_{\theta} \) must contain \(1/(2-z)\) and \(z/(2-z)\), which enables us to choose
\begin{align}
\theta(z) = z \frac{z - 1/2}{1 - z/2}.\label{theta}\end{align}
So $(1+z)K_{z^2}=hK_\theta$ with  extremal function $h$ in \eqref{h} and  associated inner function $\theta$ in \eqref{theta}.  We then compute
$$v=\la\theta,|h|^2\ra=\la h \theta, h \ra= \frac{1}{6} \left(2(z+1)(z- 1/2)z, (2-z)(z+1) \right)= -\frac{1}{3}.$$
Solving the quadratic equation \( \theta(z) = v \) yields the eigenvalues $$a=\frac{1+i\sqrt{2}}{3},\;\;b=\frac{1-i\sqrt{2}}{3}.$$  By Theorem \ref{thm whole}, the spectrum of $A_z^\ma{M}$ is $$\sigma(A_z^{\ma{M}})=\sigma_p(A_z^\ma{M})=\{a, b\},$$ and Theorem \ref{thm ps} ensures the functions  $ (z-a)(z+1)$ and $(z-b)(z+1)$ are the eigenvectors corresponding to the eigenvalues $b$ and $a$, respectively.

The Frostman shift $\theta_v$  of $\theta$, as defined in  \eqref{thetav}, is given by $$\theta_v(z)=\frac{\theta(z)-v}{1-\ol{v}\theta(z)}
=\frac{3z^2-2z+1}{z^2-2z+3}=\frac{z-a}{1-bz} \frac{z-b}{1-az}:=\theta_1\theta_2, $$ where   $$ \theta_1(z)=\frac{z-a}{1-bz},\;\;\theta_2(z)=\frac{z-b}{1-az}.
\;\;$$   Furthermore, the model space $K_{\theta_v}$ admits two orthogonal decompositions  $$ K_{\theta_v}=K_{\theta_1}\oplus \theta_1K_{\theta_2}=K_{\theta_2}\oplus \theta_2K_{\theta_1}.$$   Theorem \ref{thm com} implies the only nontrivial invariant subspaces of $A_{z}^{\ma{M}}$ on $hK_\theta$ are $hJ_v^{-1}(\theta_1K_{\theta_2})$ and $hJ_v^{-1}(\theta_2 K_{\theta_1})$, where $J_v^{-1}$ is defined in \eqref{inver}. Noting that $$\theta_1K_{\theta_2}=\mathbb{C}\frac{\theta_1}{1-az}\;\;\mbox{and}\;\;
\theta_2K_{\theta_1}=\mathbb{C}\frac{\theta_2}{1-bz},$$  we explicitly calculate  \begin{align*}  J_v^{-1}\left(\frac{\theta_1}{1-az} \right)&=\frac{2\sqrt{2}}{3} \frac{ \theta_1}{1-\frac{1}{3}\theta_v}\frac{1}{1-az}=
\frac{3\sqrt{2}}{2}\frac{z-a}{2-z},\\  J_v^{-1}\left(\frac{\theta_2}{1-bz} \right)&=\frac{2\sqrt{2}}{3} \frac{ \theta_2}{1-\frac{1}{3}\theta_v}\frac{1}{1-bz}=
\frac{3\sqrt{2}}{2}\frac{z-b}{2-z}.\end{align*} Consequently, the  nontrivial invariant subspaces of $A_z^{\ma{M}}$ are $$hJ_v^{-1}(\theta_1 K_{\theta_2})= \mathbb{C}(1+z)(z-a)\;
\;\mbox{and}\;\; hJ_v^{-1}(\theta_2 K_{\theta_1})= \mathbb{C}(1+z)(z-b),$$ which are precisely the eigenspaces  corresponding to the eigenvalues $b$ and $a$, respectively.\end{exm}

 \textbf{Acknowledgements}\;\; The work was supported in part by the National Natural Science Foundation of China (Grant No. 12471126).
\vspace{1mm}

\textbf{Data Availability}\;\; Data sharing not applicable to this article as no datasets were generated or analysed during
the current study.

\textbf{Conflict of interest}\;\; The authors state that there is no conflict of interest.

\end{document}